\setlist[enumerate,1]{label=(\arabic*), ref=(\arabic*)}
\setlist[enumerate,3]{label=(\roman*), ref=(\roman*)}
\numberwithin{equation}{section}
\theoremstyle{plain}
\newtheorem{thm}{Theorem}[section]
\newtheorem*{thm*}{Theorem}
\newtheorem{lem}[thm]{Lemma}
\newtheorem*{lem*}{Lemma}
\newtheorem{prop}[thm]{Proposition}
\newtheorem*{prop*}{Proposition}
\newtheorem*{cor*}{Corollary}
\newtheorem*{claim*}{Claim}
\newtheorem{conj}[thm]{Conjecture}
\newtheorem*{conj*}{Conjecture}
\theoremstyle{definition}
\newtheorem{defn}[thm]{Definition}
\newtheorem*{defn*}{Definition}
\newtheorem*{ques*}{Question}
\newtheorem{exa}[thm]{Example}
\newtheorem*{exa*}{Example}
\theoremstyle{remark}
\newtheorem{rmk}[thm]{Remark}
\newtheorem*{rmk*}{Remark}
\newtheorem*{notation}{Notation}
\numberwithin{figure}{section}
\numberwithin{table}{section}
\numberwithin{equation}{section}
\def \CC {\mathbb{C}}
\def \FF {\mathbb{F}}
\def \PP {\mathbb{P}}
\def \QQ {\mathbb{Q}}
\def \ZZ {\mathbb{Z}}
\def \Ecal {\mathcal{E}}
\def \Fcal {\mathcal{F}}
\def \Hcal {\mathcal{H}}
\def \Ical {\mathcal{I}}
\def \Kcal {\mathcal{K}}
\def \Ocal {\mathcal{O}}
\def \Qcal {\mathcal{Q}}
\def \Scal {\mathcal{S}}
\def \Tcal {\mathcal{T}}
\def \Ucal {\mathcal{U}}
\def \Vcal {\mathcal{V}}
\def \Xcal {\mathcal{X}}
\def \Ycal {\mathcal{Y}}
\def \Zcal {\mathcal{Z}}
\def \Lfr {\mathfrak{L}}
\def \hbar {\bar{h}}
\DeclareMathOperator{\lcm}{lcm}
\DeclareMathOperator{\Sym}{Sym}
\DeclareMathOperator{\Proj}{Proj}
\DeclareMathOperator{\sign}{sign}
\DeclareMathOperator{\codim}{codim}
\DeclareMathOperator{\Aut}{Aut}
\DeclareMathOperator{\GL}{GL}
\DeclareMathOperator{\PGL}{PGL}
\DeclareMathOperator{\Hom}{Hom}
\DeclareMathOperator{\Gr}{Gr}
\title{Algebraic Hyperbolicity of Surfaces in Fano threefolds with Picard number one}
\author{Haesong Seo}
\address{Department of Mathematical Sciences, KAIST, 291 Daehak-ro, Yuseong-gu, Deajeon, 34141, Republic of Korea
}
\email{hss21@kaist.ac.kr}
\subjclass[2020]{
    13D02, 
    14J45. 
}
\keywords{algebraic hyperbolicity, surface, Fano threefold}
\begin{document}

\maketitle

\begin{abstract}
    In this paper, we study the algebraic hyperbolicity of very general surfaces in general Fano threefolds with Picard number one.
    We completely classify the algebraically hyperbolicity of those surfaces, except for surfaces in weighted hypersurfaces.
\end{abstract}


\section{Introduction}\label{sec:intro}

We will work over $\CC$.
A projective variety $X$ is \textit{algebraically hyperbolic} if there exist an ample divisor $H$ and a real number $\varepsilon > 0$ such that for any integral curve $C \subset X$, we have
\begin{equation}
    2g(\overline{C})-2 \geq \varepsilon \deg_{C}(H)
\end{equation}
where $\overline{C}$ is the normalization of $C$ and $g(\overline{C})$ is the genus of $\overline{C}$.

The concept of algebraic hyperbolicity is proposed by Demailly \cite{Demailly1997} as an algebraic analogue of Kobayashi hyperbolicity.
A complex manifold is called \textit{Kobayashi hyperbolic} if the Kobayashi pseudometric is nondegenerate.
Brody \cite{Brody1978} proved that a compact complex manifold $X$ is Kobayashi hyperbolic if and only if it is \textit{Brody hyperbolic}, i.e., there is no nonconstant entire map $f:\CC \rightarrow X$.
Demailly \cite{Demailly1997} proved that a (Kobayashi) hyperbolic complex projective manifold is algebraically hyperbolic, and conjectured the converse.

Although it is in general very difficult to determine whether a given variety is algebraically hyperbolic, there has been significant progress toward classifying the algebraic hyperbolicity of hypersurfaces of (almost) homogeneous varieties.
Recently, Hasse and Ilten \cite{HI2021} introduced the study of the algebraic hyperbolicity of surfaces in toric threefolds.
Based on a technique developed by Ein \cite{Ein1988, Ein1991}, Pacienza \cite{Pacienza2003} and Voisin \cite{Voisin1996, Voisin1998}, Coskun and Riedl \cite{CR2019, CR2023} completely classified the algebraic hyperbolicity of surfaces in several toric threefolds, including $\PP^3$, $\PP^1 \times \PP^1 \times \PP^1$, $\PP^2 \times \PP^1$, $\FF_e \times \PP^1$ and the blowup of $\PP^3$ at one point, where $\FF_e = \PP(\Ocal_{\PP^1} \oplus \Ocal_{\PP^1}(e))$.
Following \cite{HI2021}, Robins \cite{Robins2023} almost classified the algebraic hyperbolicity of surfaces in smooth projective toric threefolds with Picard number two and three.
For higher dimensions, Yeong \cite{Yeong2022} completely classified the algebraic hyperbolicity of general hypersurfaces in the projective space $\PP^m$ and the product $\PP^m \times \PP^n$, except for sextic threefolds or hypersurfaces of certain bidegrees in $\PP^3 \times \PP^1$.
More generally, Mioranci \cite{Mioranci2023} provided an almost optimal degree bound for hypersurfaces in homogeneous varieties to be algebraically hyperbolic.
Also, Moraga and Yeong \cite{MY2024} showed that toric projective manifolds satisfy the following hyperbolicity conjecture.

\begin{conj} [{\cite[Conjecture~1.1]{MY2024}}] \label{conj:Hyperbolicity-conjecture}
    Let $L$ be an ample Cartier divisor on a smooth projective manifold $X$ of dimension $n \geq 2$.
    Then a very general element of the linear system $|K_X + (3n+1)L|$ is algebraically hyperbolic.
\end{conj}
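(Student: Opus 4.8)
The plan is to run the Clemens--Ein--Voisin--Pacienza positivity machine in the form recently adapted to arbitrary ambient manifolds by Mioranci, and to let the coefficient $3n+1$ absorb both a regularity defect and the contribution of $K_X$ (which may be very negative, e.g.\ when $X$ is Fano). Write $A = K_X + (3n+1)L$, fix a member $D \in |A|$, and take $H = L|_D$ as the test polarization. By adjunction $K_D = (K_X + A)|_D = (2K_X + (3n+1)L)|_D$, so for an integral curve $C \subset D$ with normalization $\nu : \overline{C} \to D$ one has $\deg_C K_D = C \cdot (2K_X + (3n+1)L)$, and $\deg \nu^* \Omega^1_D = \deg_C K_D$ since $\det \Omega^1_D = K_D$. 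The entire problem thus reduces to a uniform lower bound for $2g(\overline{C}) - 2$ in terms of $\deg_C K_D$ minus a bounded twist by $H$; concretely one wants $2g(\overline C) - 2 \geq C \cdot (K_D - kH)$ for a universal constant $k = k(n)$, after which everything is an intersection-theoretic comparison.

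First I would set up the universal divisor $\mathcal{D} \subset X \times \PP$ with $\PP = |A|$ and projections $p : \mathcal{D} \to X$, $\pi : \mathcal{D} \to \PP$, and record the relative cotangent sequence of $\pi$ together with the conormal sequence of $\mathcal{D} \subset X \times \PP$; these express $\Omega^1_{\mathcal{D}/\PP}$ as an extension built from $p^*\Omega^1_X$ and $p^*\Ocal_X(A)$. The key technical input --- the step I expect to carry the real weight --- is a global generation statement: that $\Omega^1_X \otimes \Ocal_X(A)$, and the symmetric/exterior powers appearing in the Koszul complex of the evaluation of sections of $A$, are globally generated. This is exactly where the numerology $3n+1$ is forced, since one needs $K_X + (3n+1)L$ positive enough that these twisted bundles clear the relevant Castelnuovo--Mumford regularity thresholds for $\Omega^1_X \otimes L^{\otimes j}$ measured against $L$.

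Granting the positivity, the bound for a curve $C$ that \emph{moves} (whose deformations dominate $\mathcal{D}$) is standard: the globally generated twisted forms on $\mathcal{D}$, pulled back to the surface swept out by the family of deformations of $C$, produce a nonzero twisted holomorphic form that cannot vanish identically, and this non-vanishing is precisely the Clemens--Ein mechanism that forces $2g(\overline C) - 2 \geq C \cdot (K_D - kH)$. Substituting $K_D = (2K_X + (3n+1)L)|_D$ and $H = L|_D$ gives $2g(\overline C)-2 \geq C \cdot \bigl(2K_X + (3n+1-k)L\bigr)$, which is $\geq \varepsilon \, \deg_C H$ as soon as $2K_X + (3n+1-k-\varepsilon)L$ is nef. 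Curves that do \emph{not} move are confined to a proper subvariety of $\mathcal{D}$; these I would dispatch by Noetherian induction over $\PP$, using the ``very general'' hypothesis to discard the countably many components of the relevant Hilbert scheme whose curves violate the inequality, exactly as in Coskun--Riedl.

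The main obstacle is establishing the sharp global generation with the precise constant $3n+1$ for an \emph{arbitrary} smooth projective $X$: unlike the toric case settled in \cite{MY2024}, there is no combinatorial control on the regularity of $\Omega^1_X \otimes L^{\otimes j}$, and the negativity of $2K_X$ must be compensated uniformly across all $X$, which is what makes the clean inequality $3n+1 - k(n) > \,(\text{index-type defect of } {-K_X})$ genuinely delicate. I expect that in the surfaces-in-Fano-threefolds setting of this paper ($n=3$ and $X$ Fano of Picard number one) the required positivity can instead be checked directly from the classification of such $X$ together with the known bounds on $-K_X$, which is presumably how the conjecture is accessed here; the fully general statement would demand a cotangent-regularity bound valid for every $X$, and that uniformity is the crux.
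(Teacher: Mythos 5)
The statement you were asked to prove is labelled as a \emph{conjecture} in the paper (Conjecture~1.1 of Moraga--Yeong), and the paper contains no proof of it: it is cited as an open problem, known only for toric projective manifolds by \cite{MY2024}, and the present paper merely observes that its case-by-case analysis of Fano threefolds with Picard number one yields a finer bound than the conjecture predicts in that special setting. So there is no proof in the paper to compare your attempt against, and the honest question is whether your proposal closes the conjecture. It does not, and you essentially say so yourself.

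Concretely, the gap is the one you flag in your last paragraph, and it is not a technicality but the entire content of the conjecture. The Clemens--Ein--Voisin--Pacienza mechanism (in the Coskun--Riedl or Mioranci formulation) requires a uniform positivity input on the ambient $X$ --- in the language of this paper, a section-dominating family for $\Ocal_X(A)$ with controlled degrees, or equivalently global generation of $T_X$ (or $\Omega^1_X$) after a twist by a bounded multiple of $L$. For homogeneous or toric $X$ this is supplied by the group action or the combinatorics; for an arbitrary smooth projective $X$ no bound on that twist in terms of $n$ alone is known, and the negativity of $K_X$ cannot be ``absorbed'' by the coefficient $3n+1$ without such a bound. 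Your reduction to the inequality $2g(\overline C)-2 \geq C\cdot(K_D - kH)$ with $k=k(n)$ universal is precisely a restatement of the conjecture, not a step toward it. The remaining pieces of your sketch (the adjunction computation of $K_D$, the universal-family setup, the dispatch of non-moving curves by a countability argument over $|A|$) are standard and correct as far as they go, but they all sit downstream of the unestablished uniform global generation. Your closing remark --- that in the $n=3$, Picard-number-one Fano setting the positivity can be checked from the classification --- is exactly what the paper actually does in Section~4, and is the extent of what is currently provable.
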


One remarkable point is that the idea of Coskun and Riedl \cite{CR2019} can easily be generalized to complete intersections, or even the zero loci of regular sections of homogeneous vector bundles on (almost) homogeneous varieties.
Hence it would be reasonable to study the algebraic hyperbolicity of surfaces in Fano threefolds, considering Mukai's description of general Fano threefolds as the zero loci of general sections of homogeneous vector bundles on the product of (weighted) Grassmannians (cf. \cite{Mukai1989}, see also \cite{CCGK2016} and \cite{DBFT2021}).
Here, by general Fano manifolds we mean that they correspond to general points of their moduli.
Note that we cannot argue about surfaces in special Fano threefolds using these tools, e.g., the Mukai-Umemura threefold $X$ of genus $12$ with $\Aut(X) \simeq \PGL_2$.

In \cite{CR2019}, they proved that very general surfaces of degree $d \geq 5$ in $\PP^3$ are algebraically hyperbolic.
Generalizing their work, it is natural to ask whether very general surfaces of degree greater than the Fano index in Fano threefolds with Picard number one are algebraically hyperbolic.

\begin{thm} \label{thm:main-theorem}
    Let $X$ be a general Fano threefold with Picard number one.
    Let $\Ocal_X(1)$ be a Picard generator and $r$ the Fano index, i.e., $-K_X = \Ocal_X(r)$ in the Picard group of $X$.
    For $a \geq 1$, let $S \in |\Ocal_X(a)|$ be a very general surface.
    \begin{itemize}
        \item If $X$ can be written as a weighted hypersurface, i.e., one of \hyperlink{Fano1-1}{\textbf{1-1}}, \hyperlink{Fano1-11}{\textbf{1-11}} and \hyperlink{Fano1-12}{\textbf{1-12}}, then $S$ is algebraically hyperbolic when $a \geq r+2$.
        \item For all the other cases, $S$ is algebraically hyperbolic if and only if $a \geq r+1$.
    \end{itemize}
\end{thm}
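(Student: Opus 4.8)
The plan is to organize the argument around the adjunction identity
\begin{equation}
  K_S \cong \Ocal_X(a-r)\big|_S,
\end{equation}
which follows from $K_X = \Ocal_X(-r)$ and $K_S = (K_X + S)|_S$. This formula governs both directions: $S$ is del Pezzo when $a < r$, a (possibly Du Val) K3 surface when $a = r$, and of general type when $a > r$. I would treat the necessity and sufficiency directions separately, proving the sharp threshold $a = r+1$ in the non-weighted case and only the sufficiency at $a \ge r+2$ in the weighted cases \hyperlink{Fano1-1}{\textbf{1-1}}, \hyperlink{Fano1-11}{\textbf{1-11}}, \hyperlink{Fano1-12}{\textbf{1-12}}.

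For necessity I would exhibit low-genus curves violating the inequality for every $\ve > 0$. The cleanest supply is complete-intersection curves: writing $H = \Ocal_X(1)$ and $d = H^3$, and taking $C = S \cap D$ with $D \in |\Ocal_X(b)|$ general, adjunction on $S$ gives
\begin{equation}
  2g(C) - 2 = \big(K_S + C\big)\cdot C = (a+b-r)\,b\,a\,d, \qquad \deg_C H = ab\,d,
\end{equation}
so $(2g(C)-2)/\deg_C H = a + b - r$. Taking $b = 1$ produces hyperplane sections of ratio $a+1-r$, which are rational for $a \le r-2$ and elliptic for $a = r-1$; in either case the bound $2g-2 \ge \ve\deg_C H$ fails for all $\ve>0$. (Equivalently, for $a<r$ the surface $S$ is del Pezzo, hence uniruled.) For the boundary value $a = r$ the surface is a K3 surface, and since projective K3 surfaces contain rational curves, $S$ is again not algebraically hyperbolic. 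This settles the "only if" direction for $a \le r$.

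For sufficiency I would follow the method of Coskun--Riedl, extended from $\PP^3$ to the Mukai models. By Mukai's description a general $X$ is a zero locus $Z(s) \subset G$ of a general section of a homogeneous vector bundle $\Ecal$ on an ambient (product of) Grassmannian $G$, with $\Ocal_X(1) = \Ocal_G(1)|_X$; a very general $S \in |\Ocal_X(a)|$ is then the zero locus on $G$ of a general section of $\Ecal \oplus \Ocal_G(a)$. I would form the universal surface $\mathcal{S} \to B$ over the parameter space of such sections together with the incidence variety of pairs (curve, surface), and bound the genus of a curve $C$ in a very general fiber by restricting the conormal sequence of $S$ in $G$ to the normalization $\nu : \overline{C} \to G$. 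The essential positivity input is that $T_G$ is globally generated for homogeneous $G$; combined with a Voisin--Pacienza style vanishing for the very general member (via an infinitesimal/monodromy argument on $B$), this forces enough sections of the relevant twist of $\Omega^1_S$ to vanish along $C$, yielding $2g(\overline C) - 2 \ge \ve\,\deg_C H$ with $\ve>0$ depending only on $a-r$. Since $a-r \ge 1$ exactly when $a \ge r+1$, this gives algebraic hyperbolicity.

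The main obstacle, as in the original argument, is the boundary case $a = r+1$, where $a-r = 1$ and the positivity is marginal: one must run the degeneration/specialization at the edge of positivity, exactly as Coskun--Riedl do for quintics in $\PP^3$ (there $r=4$, $a=5$). The second difficulty is intrinsic to the weighted models: for them $G$ is a weighted projective space with quotient singularities, so $T_G$ fails to be globally generated along the singular locus and $\Ocal_G(1)$ is not very ample. The positivity estimate then loses a unit, so the argument only closes for $a-r \ge 2$; this is the source of the weaker bound $a \ge r+2$ and the reason the case $a = r+1$ is left unresolved for these three families.
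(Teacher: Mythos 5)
Your necessity argument is sound and matches the paper in substance: for $a\le r-1$ the complete-intersection curves you exhibit (or, equivalently, the uniruledness of the del Pezzo surface $S$) violate the inequality, and for $a=r$ the surface is a K3, which contains rational curves (the paper cites Chen--Gounelas--Liedtke for this). The gap is in the sufficiency direction, which you compress into the single assertion that global generation of $T_G$ together with a ``Voisin--Pacienza style vanishing'' yields $2g-2\ge\ve\deg_C H$ with $\ve>0$ depending only on $a-r$. That assertion is where essentially all of the work lies, and it is not correct as stated. The actual mechanism is a generic surjection $M_{\Ecal}|_C\to N_{f/S}$ from the Lazarsfeld--Mukai bundle, and the degree bound it produces is $\deg_C N_{f/S}\ge-\deg_C L$, where $L$ belongs to a \emph{section-dominating} family for the summands of $\Ecal$; the resulting $\ve$ therefore depends on the degrees of those $L$, not merely on $a-r$, and at the boundary $a=r+1$ the naive bound degenerates to $2g-2\ge0$.

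Three ingredients that close this gap in the paper are absent from your plan. First, the boundary case $a=r+1$ is handled by the scroll argument (Lemma~\ref{lem:surface-scroll}): a rank-one quotient of $M_{\Ocal(1)}|_C$ produces a scroll through $C$, which cannot equal the general-type surface $S$ and cannot lie in the quadrics cutting out the Mukai model, and intersecting gives the missing positive $\ve$. Second, for the families \hyperlink{Fano1-8}{\textbf{1-8}}, \hyperlink{Fano1-9}{\textbf{1-9}} and \hyperlink{Fano1-10}{\textbf{1-10}} the bundle $\Ecal$ contains a summand such as $\bigwedge^2\Ucal^{\vee}$ for which $\Ocal(1)$ is provably \emph{not} section-dominating (on $\Gr(3,6)$ all cohomology of $\bigwedge^2\Ucal^{\vee}(-1)$ vanishes), and the paper notes the scroll argument also fails there; one must instead prove nefness of a $\QQ$-twist such as $\Qcal^{\vee}\langle\frac{H}{2}\rangle$ on $C$ via a long list of Bott-type vanishings restricted to the curve. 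Your sketch gives no mechanism for these cases, so the uniform claim that the argument closes at $a=r+1$ for all non-weighted families is unsupported. Third, for the weighted hypersurfaces the relevant input is Theorem~\ref{thm:Almost-section-dominating-on-WPS}, identifying $\Ocal(d)$ with $d=\lcm(a_1,\dots,a_\ell)$ as section-dominating on the big open orbit; the loss of one unit in the final bound comes from $\deg_C\Ocal(d)=d\,\deg_C\Ocal(1)$ with $d>1$ being only partially recovered by the scroll argument, not from a failure of global generation of $T_G$ along the singular locus --- the surfaces in question avoid the singular locus entirely, so your proposed explanation does not locate the true obstruction.
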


The technique yields more refined classifications for weighted complete intersections.
It is also noteworthy that we obtain a finer bound for the hyperbolicity conjecture~\ref{conj:Hyperbolicity-conjecture} on general Fano threefolds.

As featured in various papers including \cite{CR2019} and \cite{Mioranci2023}, the main obstacle lies in dealing with surfaces of degree $r+1$.
The scroll argument \ref{lem:surface-scroll} is effective for complete intersections, while it does not provide an optimal bound for weighted hypersurfaces.
Further, this argument fails to apply to certain Fano threefolds \hyperlink{Fano1-8}{\textbf{1-8}}, \hyperlink{Fano1-9}{\textbf{1-9}} and \hyperlink{Fano1-10}{\textbf{1-10}}.

To outline the proof, for a given curve $f:C \rightarrow S$, we aim to capture the positivity of the normal sheaf $N_{f/S}$, as it yields the degree bound for the curve.
For this, we use the generalized notion of section-dominating family, which was originally introduced by \cite{CR2023}.
To deal with weighted hypersurfaces \hyperlink{Fano1-1}{\textbf{1-1}}, \hyperlink{Fano1-11}{\textbf{1-11}} and \hyperlink{Fano1-12}{\textbf{1-12}}, we determine a section-dominating line bundle on the weighted projective space in Theorem~\ref{thm:Almost-section-dominating-on-WPS}.
Then we obtain a generic surjection $M_{\Ecal}|_C \rightarrow N_{f/S}$ from the Lazarsfeld-Mukai bundle of a certain homogeneous sheaf $\Ecal$ on the weighted Grassmannian.
In the case of \hyperlink{Fano1-8}{\textbf{1-8}}, \hyperlink{Fano1-9}{\textbf{1-9}} and \hyperlink{Fano1-10}{\textbf{1-10}}, the vector bundle $\Ecal$ need not be decomposed into line bundles.
Nevertheless, we prove that the $\QQ$-vector bundle $M_{\Ecal} \langle \frac{H}{2} \rangle$ is nef on $C$, where $H$ is a hyperplane section, which provides a stronger bound.
All the other cases are almost straightforward from the argument of Coskun and Riedl \cite{CR2019}.

The paper is organized as follows.
In Section~\ref{sec:preliminaries}, we first introduce the general setup of Coskun and Riedl \cite{CR2023}, and formulate Bott's theorem for Grassmannians.
In Section~\ref{sec:technical-lemmas}, we present two technical lemmas regarding the weighted projective spaces and vanishing results of certain homogeneous vector bundles on the Grassmannians.
In Section~\ref{sec:main-results}, we prove our main theorem \ref{thm:main-theorem} by dealing with each deformation class of Fano threefolds separately.

\begin{notation}
    On the Grassmannian $\Gr(k,n)$, denote by $\Ocal(1)$ the Pl\"ucker line bundle.
    Also, denote by $\Ucal$ and $\Qcal$ the universal and the quotient bundle, respectively, which fit into the tautological sequence
    \begin{equation}
    \begin{tikzcd}
        0 \arrow{r} & \Ucal \arrow{r} & \Ocal^{\oplus n} \arrow{r} & \Qcal \arrow{r} & 0.
    \end{tikzcd}
    \end{equation}
    Note that the ranks of $\Ucal$ and $\Qcal$ are $k$ and $n-k$, respectively.
    The zero locus of a regular section of a vector bundle $\Ecal$ on an ambient variety $A$ is denoted by $\Lfr(\Ecal) \subset A$.
    For all the other notations, we follow \cite{Hartshorne1977}.
\end{notation}

\vspace{8pt} \noindent \textbf{Acknowledgements.} The author is supported by the Institute for Basic Science (IBS-R032-D1).
The author would like to express our gratitude to Professor Yongnam Lee for his suggestions on research topics and valuable comments.
The author would also like to thank for Professor Izzet Coskun for very useful comments.


\section{Preliminaries}\label{sec:preliminaries}

\subsection{Setup.} \label{subsec:setup}
We follow the setting of \cite{CR2023} in a generalized fashion.
Let $A$ be a normal projective variety of dimension $n$ equipped with a group action by an algebraic group $G$.
Suppose that the $G$-action has a smooth open orbit $A_0 \subset A$ with $\codim(A \setminus A_0) \geq 2$.
Let $\Fcal$ be a globally generated $G$-invariant vector bundle of rank $r < n-2$ on $A$.
Let $L$ be a $G$-invariant reflexive coherent sheaf of generic rank one that is locally free and globally generated on $A_0$.
Let $\Ecal = \Fcal \oplus L$.

We require some additional assumptions for the setup. \\
\hypertarget{Assump1}{\textbf{Assumption 1}}. General sections of $\Fcal$ and $\Ecal$ are regular in $A$ and $A_0$, respectively, and the zero locus $S$ of a general section of $\Ecal$ is a smooth subvariety contained in $A_0$. \\
\hypertarget{Assump2}{\textbf{Assumption 2}}. The canonical line bundle $K_S$ is ample on $S$. \\
\hypertarget{Assump3}{\textbf{Assumption 3}}. For the zero locus $X$ of a general section of $\Fcal$, we have a surjection $H^0(A,L) \rightarrow H^0(X,L)$.

The Assumption \hyperlink{Assump3}{\textbf{3}} says that $S$ is the zero locus of a general section of $L|_X$ on a general $X$.
In fact, the Assumption \hyperlink{Assump3}{\textbf{3}} can be obtained by the following vanishing condition of intermediate cohomology groups:
\begin{equation} \label{eqn:restriction-map-is-surjective}
    H^{i} \left( A, \bigwedge^{i} \Fcal^{\vee} \otimes L \right) = 0
\end{equation}
for $1 \leq i \leq r$.
Indeed, from the Assumption \hyperlink{Assump1}{\textbf{1}} one can consider the Koszul resolution
\[
\begin{tikzcd}
    K^{\bullet}(\Fcal^{\vee}) \arrow[r] & \Ocal_X \arrow[r] & 0,
\end{tikzcd}
\]
where
\[
    K^{i}(\Fcal^{\vee}) = \bigwedge^{-i-1} \Fcal^{\vee}
\]
for $i \leq -1$.
We are done from the spectral sequence and the condition~\ref{eqn:restriction-map-is-surjective}.

Let $\Scal_1 \rightarrow \Vcal \subset H^0(A,\Ecal)$ be the universal family of the vanishing loci of regular sections of $\Ecal$.
Then it factors through the universal family $\Xcal_1 \rightarrow \Vcal$ of the vanishing loci of regular sections of $\Fcal$.
Suppose that there exists a map $f:C \rightarrow S$ from a curve of genus $g$.
Let $\Hcal \rightarrow \Vcal$ be the relative Hilbert scheme with universal curves $\Ycal_1 \rightarrow \Scal_1$.
Since $S$ has a finite automorphism group from the Assumption \hyperlink{Assump2}{\textbf{2}}, there exists a $G$-invariant subvariety $\Ucal \subset \Hcal$ such that $\Ucal \rightarrow \Vcal$ is \'etale by a standard argument (cf. \cite[Section~1]{CR2004}).
Resolving the singularity of generic fibers of $\Ycal_1|_{\Ucal}$ and restricting $\Ucal$ further (if necessary), we get a smooth family $\Ycal \rightarrow \Ucal$ of curves of genus $g$.
By taking the pullbacks $\Scal$ and $\Xcal$ of $\Scal_1$ and $\Xcal_1$ to $\Ucal$, respectively, denote by
\begin{equation}
\begin{tikzcd}
    \Ycal \arrow{r}{h} & \Scal \arrow{r}{i} & \Xcal \arrow{r}{\pi_2} \arrow{d}{\pi_1} & A \\
    & & \Ucal &
\end{tikzcd}
\end{equation}
the resulting morphisms.
Note that $h$ is generically injective and $i$ is immersive.
Also, the composition $\pi_2 \circ i \circ h$ dominates $A$ because $\Ycal$ is invariant under the action of $G$.
For $t \in \Ucal$, let $Y_t$, $S_t$ and $X_t$ be the fiber of $\Ycal$, $\Scal$ and $\Xcal$ over $t$, respectively, and write $h_t = h|_{Y_t}:Y_t \rightarrow S_t$ and $i_t = i|_{S_t}:S_t \rightarrow X_t$.

Define the \textit{vertical tangent sheaf} of $\Ycal$ over $A$ by $T_{\Ycal/A} = \ker(T_{\Ycal} \rightarrow h^{\ast}i^{\ast}\pi_2^{\ast}T_A)$, and similarly define $T_{\Scal/A}$ and $T_{\Xcal/A}$.
For a coherent sheaf $\Ecal$ on a projective variety $A$, we define the \textit{Lazarsfeld-Mukai sheaf} $M_{\Ecal}$ of $\Ecal$ by
\begin{equation}
    M_{\Ecal} = \ker(H^0(A,\Ecal) \otimes \Ocal_A \xrightarrow{\mathrm{ev}} \Ecal),
\end{equation}
where $\mathrm{ev}$ denotes the evaluation map.

\begin{prop}[cf. {\cite[Proposition~2.1]{CR2023}}] \label{prop:basic-diagrams}
    Under the above setting, assume further that the image of $\pi_2 \circ i:\Scal \rightarrow A$ is in $A_0$.
    For general $t \in \Ucal$,
    \begin{enumerate}
        \item $T_{\Xcal/A} \simeq \pi_2^{\ast}M_{\Fcal}$;
        \item $T_{\Scal/A} \simeq i^{\ast}\pi_2^{\ast}M_{\Ecal}$;
        \item $N_{h_t/S_t} \simeq N_{h/\Scal}|_{Y_t}$;
        \item There exists a map $T_{\Scal/A} \rightarrow N_{h/\Scal}$ that is surjective on $A_0$.
    \end{enumerate}
\end{prop}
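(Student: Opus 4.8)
The plan is to handle the four items in two groups: the tangent-sheaf identifications (1)--(2) are infinitesimal computations on the universal family, while (3)--(4) concern the normal sheaf of the curve and follow from base change together with a positivity argument. Throughout I would work over a neighbourhood of a general $t \in \Ucal$, where the families involved are smooth (this is exactly what the resolution and shrinking of $\Ucal$ arrange).

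For (1) and (2), I would realize the universal families as zero loci of tautological sections. Over $\Ucal \times A$ there is a tautological section $\sigma_\Fcal$ of $\pi_2^\ast \Fcal$ (resp.\ $\sigma_\Ecal$ of $\pi_2^\ast \Ecal$) whose value at $(t,x)$ is $s_t(x)$, where $s_t$ is the section attached to $t$; by construction $\Xcal = Z(\sigma_\Fcal)$ and $\Scal = Z(\sigma_\Ecal)$. Since the sections are regular (Assumption 1), the conormal sequence gives $0 \to T_\Xcal \to T_{\Ucal \times A}|_\Xcal \xrightarrow{d\sigma_\Fcal} \pi_2^\ast \Fcal|_\Xcal \to 0$. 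Decomposing $T_{\Ucal \times A} = \pi_1^\ast T_\Ucal \oplus \pi_2^\ast T_A$ and intersecting with $\ker(T_\Xcal \to \pi_2^\ast T_A) = T_{\Xcal/A}$, the identification reduces to computing $d\sigma_\Fcal$ along the parameter directions. Because the tautological section depends linearly on the section parameter and $\Ucal \to \Vcal$ is étale, this derivative is precisely the pullback of the evaluation map $H^0(A,\Fcal)\otimes \Ocal_A \xrightarrow{\ev}\Fcal$, whose kernel is by definition $M_\Fcal$; this yields $T_{\Xcal/A} \simeq \pi_2^\ast M_\Fcal$. Running the same computation with $\Ecal$ and restricting along $i$ gives $T_{\Scal/A} \simeq i^\ast \pi_2^\ast M_\Ecal$, where I use $M_\Ecal = M_\Fcal \oplus M_L$, compatible with $H^0(A,\Ecal) = H^0(A,\Fcal)\oplus H^0(A,L)$.

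For (3), I would read $N_{h/\Scal}$ as the relative normal sheaf of the $\Ucal$-morphism $h\colon \Ycal \to \Scal$, i.e.\ $N_{h/\Scal} = \coker(T_{\Ycal/\Ucal} \to h^\ast T_{\Scal/\Ucal})$, the horizontal $T_\Ucal$-directions cancelling. Since $\Ycal \to \Ucal$ and $\Scal \to \Ucal$ are smooth near a general $t$, both $T_{\Ycal/\Ucal}$ and $T_{\Scal/\Ucal}$ are locally free and commute with restriction to the fibre $Y_t$; hence so does their cokernel, and generic flatness gives $N_{h/\Scal}|_{Y_t} \simeq N_{h_t/S_t}$ for general $t$. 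For (4), the map is the natural composite $h^\ast T_{\Scal/A} \hookrightarrow h^\ast T_\Scal \twoheadrightarrow N_{h/\Scal}$, the surjection being the definition of the relative normal sheaf. To prove surjectivity over the preimage of $A_0$, I would first note that on $A_0$ the morphism $\pi_2\circ i$ is a smooth surjection — here the $G$-homogeneity of $A_0$ and the domination of $A$ by $\pi_2\circ i\circ h$ are used — so $T_{\Scal/A}$ is a subbundle with quotient $(\pi_2 i)^\ast T_A$, and surjectivity of the composite is equivalent to $h^\ast T_{\Scal/A} + \im(T_\Ycal) = h^\ast T_\Scal$. By (2) this says that the vertical deformations $M_\Ecal|_C$ of the surface together with the tangent to the curve span $h^\ast T_\Scal$; because $\Ecal$ is globally generated and $\Vcal$ is the full (open) linear system, one can deform $S_t$ so as to move any point of $C\cap A_0$ in a prescribed normal direction, yielding surjectivity pointwise.

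I expect the surjectivity in (4) to be the main obstacle: items (1)--(3) are essentially formal once the universal family is in place, whereas (4) genuinely uses the positivity (global generation of $\Ecal$) and the homogeneity of $A_0$ to rule out a cokernel. It is precisely here that the hypotheses $\codim(A\setminus A_0)\geq 2$ and the $G$-invariance of $\Ycal$ enter: the cokernel is a $G$-equivariant sheaf on the homogeneous space $A_0$, so its vanishing at one general point propagates to all of $A_0$, while the boundary $A\setminus A_0$ is pushed into codimension $\geq 2$ so as not to interfere with the degree estimates that follow.
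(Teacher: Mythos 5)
Your proposal is correct and takes essentially the same route as the paper: items (1)--(2) are exactly the content of the paper's diagram (the tautological section, the splitting $T_{\Ucal\times A}\simeq \pi_1^{\ast}T_{\Ucal}\oplus\pi_2^{\ast}T_A$ with $T_{\Ucal}$ trivialized as $H^0(A,\Ecal)\otimes\Ocal$ via \'etaleness over $\Vcal$, and the identification of the parameter-direction derivative with the evaluation map, whose kernel is the Lazarsfeld--Mukai sheaf), while (3) is right-exactness of restriction to fibres and (4) is the composite $h^{\ast}T_{\Scal/A}\hookrightarrow h^{\ast}T_{\Scal}\twoheadrightarrow N_{h/\Scal}$, just as in the Coskun--Riedl argument the paper defers to. The only step I would tighten is your justification of the span condition in (4): the required surjectivity of $T_{\Ycal}\rightarrow(\pi_2\circ i\circ h)^{\ast}T_A$ over the preimage of $A_0$ comes from the $G$-equivariance of $\Ycal$ together with the transitivity of $G$ on the open orbit $A_0$ (as in your closing paragraph), not from global generation of $\Ecal$ --- deforming $S_t$ does not by itself move the chosen curve, since the assignment of a curve to $t$ is only defined \'etale-locally on $\Vcal$.
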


\begin{proof}
    The normal sheaf $N_{\Xcal/\Ucal \times A}$ is isomorphic to $\pi_{2}^{\ast}\Fcal$ by \cite[Example~6.3.4]{Fulton1984}.
    Also, we have $N_{\Scal/\Ucal \times A} \simeq i^{\ast}\pi_2^{\ast}\Ecal$ by the assumption.
    The same proof would be applied except for (2).
    For (2), the isomorphism comes from the following diagram.
     \[
      \begin{tikzcd}
        & & 0 & & \\
        & i^{\ast}\pi_2^{\ast}\Ecal \arrow[r, "\simeq"] & N_{\Scal/\Ucal \times A} \arrow{u} & & \\
        0 \arrow{r} & H^0(A,\Ecal) \otimes \Ocal \arrow{r} \arrow{u} & i^{\ast}\pi_2^{\ast}T_A \oplus H^0(A,\Ecal) \otimes \Ocal \arrow{r} \arrow{u} & i^{\ast}\pi_2^{\ast}T_A \arrow{r} & 0 \\
        0 \arrow{r} & T_{\Scal/A} \arrow{r} \arrow{u} & T_{\Scal} \arrow{r} \arrow{u} & i^{\ast}\pi_2^{\ast}T_A \arrow{r} \arrow[u, equal] & 0 \\
        & 0 \arrow{u} & 0 \arrow{u} & &
      \end{tikzcd}
     \]
    Here, the second column is the definition of the normal sheaf.
\end{proof}

\subsection{Section-dominating family.} \label{subsec:section-dominating}
Using a generic surjection $M_{\Ecal}|_{Y_t} \rightarrow N_{h_t/S_t}$, one can derive a lower bound for the degree of $N_{h_t/S_t}$. To deal with the weighted projective spaces, we generalize the notion of a section-dominating collection of line bundles.

\begin{defn} [cf. {\cite[Definition~2.3]{CR2023}}]
    Let $A$ be a normal projective variety, and $A_0 \subset A$ a big open subset, i.e., $A \setminus A_0$ has codimension at least two in $A$.
    Let $\Ecal$ be a reflexive coherent sheaf on $A$, and $L_1, \dots, L_m$ a collection of nontrivial reflexive coherent sheaves on $A$.
    The family $L_1, \dots, L_m$ is called \textit{section-dominating} for $\Ecal$ on $A_0$ if
    \begin{enumerate}
        \item $\Ecal$ is locally free on $A_0$;
        \item $L_1, \dots, L_m$ are locally free and globally generated on $A_0$; and
        \item the multiplication map
            \begin{equation}
            \begin{tikzcd}
                \bigoplus_{i=1}^m H^0(L_i \otimes \Ical_p) \otimes H^0(\Ecal [\otimes] L_i^{\vee}) \ar[r] &
                H^0(\Ecal \otimes \Ical_p)
            \end{tikzcd}
            \end{equation}
            is surjective for any $p \in A_0$, where $\Ical_p$ is the ideal sheaf of $p$ and
            \[
                \Ecal [\otimes] L_i^{\vee} = (\Ecal \otimes L_i^{\vee})^{\vee \vee}.
            \]
    \end{enumerate}
    If $A_0 = A$, then we just call it section-dominating for $\Ecal$.
\end{defn}

\begin{exa} [cf. {\cite[Example~2.4]{CR2023}}, {\cite[Example~2.5]{Mioranci2023}}] \label{exa:section-dominating-on-Grassmannian}
    On the projective space $\PP^{n}$, the line bundle $\Ocal_{\PP^n}(1)$ is section-dominating for $\Ocal_{\PP^n}(a)$ if $a \geq 1$.
    Also, on the Grassmannian $\Gr(k,n)$, the Pl\"ucker line bundle $\Ocal(1)$ is section-dominating for $\Ocal(a)$ if $a \geq 1$ because the Pl\"ucker embedding is projectively normal.
\end{exa}

\begin{lem}[cf. {\cite[Proposition~2.6]{CR2023}}] \label{lem:section-dominating-Mukai-bundle}
    Let $A$ be a normal projective variety, and $A_0 \subset A$ a big open subset.
    Let $\Ecal$ be a reflexive coherent sheaf on $A$.
    If a family $L_1, \dots, L_m$ of reflexive coherent sheaves on $A$ forms a section-dominating family for $\Ecal$ on $A_0$, there is a map $\oplus_{i=1}^u M_{L_i}^{\oplus s} \rightarrow M_{\Ecal}$ that is surjective on $A_0$ for some integer $s$.
\end{lem}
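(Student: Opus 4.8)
The plan is to build the map $\Psi\colon \bigoplus_{i=1}^m M_{L_i}^{\oplus s}\to M_{\Ecal}$ out of the multiplication morphisms $L_i\to\Ecal$ and then to verify surjectivity on $A_0$ fibrewise. First I would identify the global sections of $\Ecal[\otimes]L_i^{\vee}$ with sheaf morphisms $L_i\to\Ecal$. Indeed, since $L_i$ and $\Ecal$ are reflexive and are both locally free on the big open set $A_0$, the sheaf $\lhom(L_i,\Ecal)$ is reflexive and agrees with $\Ecal\otimes L_i^{\vee}$ on $A_0$; as two reflexive sheaves on a normal variety that coincide on a big open subset are equal, one gets $\lhom(L_i,\Ecal)=(\Ecal\otimes L_i^{\vee})^{\vee\vee}=\Ecal[\otimes]L_i^{\vee}$. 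Thus each $\sigma\in H^0(\Ecal[\otimes]L_i^{\vee})$ is a morphism $\phi_\sigma\colon L_i\to\Ecal$, which induces $H^0(\phi_\sigma)\colon H^0(L_i)\to H^0(\Ecal)$ compatibly with the two evaluation maps, hence by naturality a morphism $M_{L_i}\to M_{\Ecal}$ on kernels. Choosing a basis $\sigma_{i,1},\dots,\sigma_{i,s_i}$ of $H^0(\Ecal[\otimes]L_i^{\vee})$ for each $i$ and setting $s=\max_i s_i$, I assemble these maps (padding the surplus copies with zero morphisms) into a single map $\Psi\colon\bigoplus_{i=1}^m M_{L_i}^{\oplus s}\to M_{\Ecal}$.

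Next I would reduce surjectivity on $A_0$ to a pointwise statement. The cokernel $\coker\Psi$ is coherent and tensoring is right exact, so $(\coker\Psi)\otimes k(p)=\coker(\Psi\otimes k(p))$ for every point $p$; by Nakayama it therefore suffices to prove that the fibre map $\Psi\otimes k(p)$ is surjective for each $p\in A_0$.

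Finally I would compute the fibres of the Lazarsfeld–Mukai sheaves. For a globally generated vector bundle $\Gcal$ that is locally free at $p$, the sequence $0\to M_{\Gcal}\to H^0(\Gcal)\otimes\Ocal\to\Gcal\to 0$ yields $M_{\Gcal}\otimes k(p)\cong\ker(\mathrm{ev}_p\colon H^0(\Gcal)\to\Gcal\otimes k(p))=H^0(\Gcal\otimes\Ical_p)$, the space of sections vanishing at $p$. Applying this to each $L_i$ and to $\Ecal$ over $A_0$, where all of them are locally free and globally generated (for $\Ecal$ this global generation is available in our setup, where $\Ecal=\Fcal\oplus L$ with $\Fcal$ and $L$ globally generated on $A_0$), the fibre map $\Psi\otimes k(p)$ becomes $\bigoplus_{i,j}H^0(L_i\otimes\Ical_p)\to H^0(\Ecal\otimes\Ical_p)$, $t\mapsto\sigma_{i,j}\cdot t$. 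Its image is exactly the image of the multiplication map $\bigoplus_i H^0(L_i\otimes\Ical_p)\otimes H^0(\Ecal[\otimes]L_i^{\vee})\to H^0(\Ecal\otimes\Ical_p)$, which is surjective by the section-dominating hypothesis. Hence $\Psi\otimes k(p)$ is surjective for every $p\in A_0$, and $\Psi$ is surjective on $A_0$. I expect the main obstacle to be precisely this fibre identification: one must control the double-dual conventions and local freeness on the big open $A_0$, and secure the global generation of $\Ecal$ there, since this is the step that converts the purely cohomological section-dominating condition into surjectivity of an actual sheaf map.
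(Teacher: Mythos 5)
Your proof is correct and is essentially the argument the paper invokes (it cites \cite[Proposition~2.6]{CR2023} and says the same proof works): interpret sections of $\Ecal[\otimes]L_i^{\vee}$ as morphisms $L_i\to\Ecal$, induce maps on the Lazarsfeld--Mukai kernels, and check surjectivity fibrewise via $M_{\Gcal}\otimes k(p)\cong H^0(\Gcal\otimes\Ical_p)$ together with the section-dominating hypothesis. You also correctly flag the only delicate points (the reflexive-hull identification on the big open set and local freeness/global generation of $\Ecal$ on $A_0$), which hold in all of the paper's applications.
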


The same proof as in \cite{CR2023} works.
If $S$ is a surface (i.e., $r = n-3$), as the normal sheaf $N_{h_t/S_t}$ has rank one, there is some $L_i$ together with a generically surjective map $M_{L_i}|_{Y_t} \rightarrow N_{h_t/S_t}$.

\begin{prop}[cf. {\cite[Proposition~2.7]{CR2023}}] \label{prop:Mukai-bundle-degree}
    Let $A$ be a normal projective variety, and $A_0 \subset A$ a big open subset.
    Let $L$ be a reflexive coherent sheaf on $A$ that is locally free and globally generated on $A_0$.
    Let $f:C \rightarrow A$ be a map from a curve into $A$ whose image lies in $A_0$.
    If there is a generic surjection from $M_L|_C$ to a line bundle $N$ on $C$, we have $\deg_C N \geq -\deg_C L$.
\end{prop}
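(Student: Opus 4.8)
The plan is to extract the degree bound directly from the defining sequence of the Lazarsfeld–Mukai sheaf together with the observation that its dual is a quotient of a trivial bundle. Throughout I take $C$ to be a smooth projective curve (the relevant case, since $C$ arises as a normalization) and I use that $f$ factors through $A_0$, so that $L|_C = f^{\ast}L$ is a genuine line bundle and the restriction to $C$ of the evaluation sequence
\[
0 \to M_L \to H^0(A,L) \otimes \Ocal_A \xrightarrow{\mathrm{ev}} L \to 0,
\]
which is exact as a sequence of vector bundles over $A_0$, stays exact on $C$ (pullback of an exact sequence of vector bundles is exact). First I would reduce to an honest surjection: the image of the given generic surjection $M_L|_C \to N$ is a rank-one subsheaf of $N$, hence of the form $N(-D)$ for an effective divisor $D$, giving an everywhere-surjective map $M_L|_C \twoheadrightarrow N(-D)$ with $\deg_C N(-D) \le \deg_C N$. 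It therefore suffices to prove the inequality for $N(-D)$, so I may assume $M_L|_C \to N$ is surjective.

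Next I would compute two degrees. Taking degrees in the restricted evaluation sequence, whose middle term $H^0(A,L)\otimes \Ocal_C$ is trivial of degree zero, gives $\deg_C M_L|_C = -\deg_C L$. On the other hand, since $N$ is locally free on the smooth curve $C$, the kernel $K$ of $M_L|_C \to N$ is a subbundle, and dualizing
\[
0 \to K \to M_L|_C \to N \to 0
\]
produces the exact sequence $0 \to N^{\vee} \to M_L^{\vee}|_C \to K^{\vee} \to 0$. Reading off degrees yields
\[
\deg_C N \;=\; -\deg_C N^{\vee} \;=\; \deg_C K^{\vee} - \deg_C M_L^{\vee}|_C \;=\; \deg_C K^{\vee} + \deg_C M_L|_C \;=\; \deg_C K^{\vee} - \deg_C L.
\]

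The crux is then to show $\deg_C K^{\vee} \ge 0$. For this I would dualize the evaluation sequence over $A_0$, where every term is a vector bundle and $L$ is a line bundle (so $\mathcal{E}xt^1(L,\Ocal)=0$), obtaining a surjection $H^0(A,L)^{\vee} \otimes \Ocal_{A_0} \twoheadrightarrow M_L^{\vee}$; pulling back along $f$ exhibits $M_L^{\vee}|_C$ as a quotient of a trivial bundle on $C$. Its further quotient $K^{\vee}$ is then also a quotient of a trivial bundle, hence globally generated and in particular nef, so $\deg_C K^{\vee} \ge 0$. Substituting into the displayed identity gives $\deg_C N \ge -\deg_C L$, as desired.

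The main obstacle to watch is the interplay between the reflexive sheaf $L$ on $A$ and its good behaviour only on the big open set $A_0$: the exactness and self-duality of the evaluation sequence, as well as the global generation of $M_L^{\vee}$, are available only on $A_0$, so it is essential that the image of $C$ lie in $A_0$ in order to pull everything back to $C$ as an honest sequence of vector bundles. The second point requiring care is the passage from a generic surjection to an honest one, handled by the base-locus reduction above; since that reduction only decreases $\deg_C N$, no information is lost.
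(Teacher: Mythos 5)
Your proof is correct, and it rests on the same key input as the paper's: dualizing the evaluation sequence over $A_0$, where $L$ is a line bundle and $M_L$ is locally free, exhibits $M_L^{\vee}$ as a quotient of the trivial bundle $H^0(A,L)^{\vee}\otimes\Ocal$. The two arguments diverge in how they extract the degree bound from this. The paper takes the $(r-1)$-st exterior power of that surjection to show that $M_L[\otimes]\det L$ is globally generated on $A_0$, and then applies global generation to the image of the twisted map into $N(\det L|_C)$, whose cokernel is torsion. You instead restrict to $C$ first, saturate the generic surjection to an honest one onto $N(-D)$, pass to the kernel $K$ of $M_L|_C\to N$, and observe that $K^{\vee}$ is a quotient of $M_L^{\vee}|_C$, hence globally generated with $\deg_C K^{\vee}\geq 0$; the degree count in the two short exact sequences then yields the inequality. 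These are two faces of the same computation --- from $0\to N^{\vee}\to M_L^{\vee}|_C\to K^{\vee}\to 0$ one gets $\det K^{\vee}\cong N(\det L|_C)$, so your nonnegativity statement is exactly the paper's --- but your packaging avoids the exterior-power and reflexive-hull bookkeeping ($\bigwedge^{[r-1]}$ and $[\otimes]$) entirely, at the harmless cost of assuming $C$ smooth (which is the case in every application, since $C$ arises as a normalization). Your handling of the word ``generic'' (saturating to $N(-D)$) and the paper's (noting the cokernel is torsion) are likewise equivalent devices. No gaps.
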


\begin{proof}
    From the sequence
    \[
    \begin{tikzcd}
        0 \ar[r] & M_L \ar[r] & H^0(A,L) \otimes \Ocal_{A} \ar[r] & L,
    \end{tikzcd}
    \]
    consider its dual
    \[
    \begin{tikzcd}
        0 \ar[r] & L^{\vee} \ar[r] & H^0(A, L)^{\vee} \otimes \Ocal_{A} \ar[r] & M_L^{\vee}.
    \end{tikzcd}
    \]
    The map $H^0(A, L)^{\vee} \otimes \Ocal_{A} \rightarrow M_L^{\vee}$ is surjective on $A_0$, so there is a map
    \[
    \begin{tikzcd}
        \bigwedge^{r-1} H^0(A,L)^{\vee} \otimes \Ocal_{A} \arrow[r] & \bigwedge^{[r-1]} M_L^{\vee} = (\bigwedge^{r-1} M_L^{\vee})^{\vee\vee}
    \end{tikzcd}
    \]
    that is surjective on $A_0$, where $r$ is the generic rank of $M_L$.
    Hence the sheaf $M_L [\otimes] \det L$ is globally generated on $A_0$, where $\det L = \bigwedge^{[\mathrm{rk}(L)]} L$.
    Since the image of the map
    \[
    \begin{tikzcd}
        (M_L [\otimes] \det L)|_C = M_L|_C \otimes \det L|_C \arrow[r] &
        N (\det L|_C)
    \end{tikzcd}
    \]
    is also globally generated, its degree is nonnegative.
    Since the cokernel is torsion, we have $\deg_C N(\det L|_C) \geq 0$, i.e., $\deg_C N \geq -\deg_C L$.
\end{proof}

\subsection{Surface scrolls.} \label{subsec:surface-scroll}
Recall that a surface $S$ in $\PP^n$ is said to be a \textit{scroll} if through any point of $S$, there passes a line contained in $S$.

\begin{lem} [cf. {\cite[Lemma~2.12]{CR2023}}] \label{lem:surface-scroll}
    Under the setting of Section~\ref{subsec:setup}, let $H$ be a very ample line bundle on $A$.
    Consider the embedding $A \hookrightarrow \PP^n$ induced by $H$.
    Then a rank one quotient $M_{H}|_{Y_t} \rightarrow Q$ on $Y_t$ induces a scroll over $Y_t$ in $\PP^n$ whose $\Ocal_{\PP^n}(1)$-degree is equal to $\deg_{Y_t} Q + (Y_t.H)$.
\end{lem}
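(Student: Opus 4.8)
The plan is to realize the scroll as the image of a ruled surface $\PP_{Y_t}(E) \to \PP^n$ built from a rank two globally generated bundle $E$ on $Y_t$, and then to read off its degree from a Chern class computation on that projective bundle.

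First I would restrict the evaluation sequence defining $M_H$ to $Y_t$, obtaining
\[
0 \to M_H|_{Y_t} \to W \otimes \Ocal_{Y_t} \xrightarrow{\ev} H|_{Y_t} \to 0,
\]
where $W = H^0(A,H)$, so that $\dim W = n+1$ and $\rk M_H = n$. Given the rank one quotient $q\colon M_H|_{Y_t} \to Q$ (which I take to be a line bundle, i.e.\ $K := \ker q$ is saturated, hence a subbundle of rank $n-1$), I set $E := (W \otimes \Ocal_{Y_t})/K$. Since $K$ has rank $n-1$ inside the rank $n+1$ trivial bundle, $E$ has rank two and sits in the short exact sequence
\[
0 \to Q \to E \to H|_{Y_t} \to 0 .
\]
In particular $E$ is an extension of line bundles, so it is locally free of rank two, and it is globally generated since it is a quotient of $W \otimes \Ocal_{Y_t}$.

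Next I would form the $\PP^1$-bundle $\pi\colon \PP_{Y_t}(E) \to Y_t$ of rank one quotients of $E$ and compose the pullback of $W \otimes \Ocal_{Y_t} \twoheadrightarrow E$ with the tautological quotient $\pi^{\ast}E \twoheadrightarrow \Ocal_{\PP_{Y_t}(E)}(1)$. This produces a surjection $W \otimes \Ocal \twoheadrightarrow \Ocal_{\PP_{Y_t}(E)}(1)$ and hence a morphism $\psi\colon \PP_{Y_t}(E) \to \PP^n$ with $\psi^{\ast}\Ocal_{\PP^n}(1) = \Ocal_{\PP_{Y_t}(E)}(1)$. Because $E$ is globally generated of rank two, the restriction of $\Ocal(1)$ to each fiber of $\pi$ has degree one, so each fiber maps linearly and isomorphically onto a line of $\PP^n$; thus $\Sigma := \psi(\PP_{Y_t}(E))$ is a surface ruled by lines over $Y_t$, i.e.\ a scroll.

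Finally, the degree is a Chern class computation. Writing $\xi = c_1(\Ocal_{\PP_{Y_t}(E)}(1))$, the Grothendieck relation $\xi^2 = \pi^{\ast}c_1(E)\cdot \xi$ (using $c_2(E)=0$ on a curve) together with $\pi_{\ast}\xi = [Y_t]$ gives $\int_{\PP_{Y_t}(E)} \xi^2 = \deg_{Y_t} E$, and the extension above yields $\deg_{Y_t} E = \deg_{Y_t} Q + \deg_{Y_t}(H|_{Y_t}) = \deg_{Y_t} Q + (Y_t.H)$. The one point that needs care — and which I expect to be the main obstacle — is passing from $\int \xi^2$ to the honest $\Ocal_{\PP^n}(1)$-degree of $\Sigma$: this requires $\psi$ to be birational onto its image, equivalently that the classifying map from $Y_t$ to the Grassmannian of lines in $\PP^n$ be generically injective, so that the rulings sweep out each line exactly once. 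I would verify this genericity for the family at hand (for general $t \in \Ucal$), after which $\deg_{\Ocal_{\PP^n}(1)} \Sigma = \int \xi^2 = \deg_{Y_t} Q + (Y_t.H)$, as claimed.
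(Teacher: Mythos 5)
The paper does not actually prove this lemma itself --- it is quoted from [CR2023, Lemma~2.12] --- and your argument is precisely the standard proof of that result: pass from the rank-one quotient of $M_H|_{Y_t}$ to a rank-two globally generated quotient $E$ of $H^0(A,H)\otimes\Ocal_{Y_t}$, map $\PP_{Y_t}(E)$ to $\PP^n$ so that each fiber goes to a line through the corresponding point of $Y_t$, and compute $\xi^2 = \deg E = \deg_{Y_t}Q+(Y_t.H)$ via the Grothendieck relation. You also correctly isolate the only delicate point, namely that the ruling must sweep the image generically once for $\xi^2$ to equal the honest $\Ocal_{\PP^n}(1)$-degree of the image rather than a multiple of it; since every application in the paper only uses the resulting upper bound on $\deg\Sigma$ together with a B\'ezout-type lower bound, this caveat does not affect anything downstream.
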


\subsection{Bott's theorem for Grassmannians.} \label{subsec:bott-theorem}
We will use Bott's theorem \ref{thm:Bott-theorem-for-Grassmannians} to analyze whether a certain homogeneous vector bundle is section-dominating for another homogeneous vector bundle on the Grassmannian.
Let us introduce some notions in representation theory.
A \textit{partition} $\alpha$ of a natural number $n$ is a sequence $\alpha = (\alpha_1, \dots, \alpha_n) \in \ZZ^{n}$ such that $\alpha_1 \geq \cdots \geq \alpha_n \geq 0$ and $\alpha_1 + \cdots + \alpha_n = n$.
We often omit 0 in the expression of a partition, e.g.,
\[
    (\alpha_1, \dots, \alpha_k, 0, \dots, 0) = (\alpha_1, \dots, \alpha_k).
\]
The \textit{conjugate partition} $\alpha'$ of a partition $\alpha$ is given by
 \[
  \alpha_i' = |\{ j: \alpha_j \geq i \}|
 \]
for integers $1 \leq i \leq n$.

Let $E$ be a vector space of dimension $n$ over $\CC$.
Denote by $\bigwedge^r E$ the $r$-th exterior power of $E$ for $0 \leq r \leq n$.
For $0 \leq p,q \leq n$ with $p+q \leq n$, there is the natural multiplication map
 \[
  \mu: \bigwedge^p E \otimes \bigwedge^q E \rightarrow \bigwedge^{p+q} E
 \]
given by
 \[
  \mu(u_1 \wedge \cdots \wedge u_p \otimes v_1 \wedge \cdots \wedge v_q) = u_1 \wedge \cdots \wedge u_p \wedge v_1 \wedge \cdots \wedge v_q
 \]
for $u_1, \dots, u_p, v_1, \dots, v_q \in E$.
Also, there is the natural comultiplication map
 \[
  \Delta: \bigwedge^{p+q} E \rightarrow \bigwedge^p E \otimes \bigwedge^q E
 \]
given by
 \[
  \Delta(u_1 \wedge \cdots \wedge u_{p+q}) = \sum_{\sigma} (-1)^{\sign \sigma} u_{\sigma(1)} \wedge \cdots \wedge u_{\sigma(p)} \otimes u_{\sigma(p+1)} \wedge \cdots \wedge u_{\sigma(p+q)}
 \]
for $u_1, \dots, u_{p+q} \in E$, where the sum is taken over permutations $\sigma \in \Sigma_{p+q}$ on $p+q$ letters satisfying
 \[
  \sigma(1) < \cdots < \sigma(p), \qquad \sigma(p+1) < \cdots < \sigma(p+q).
 \]

\begin{defn} [cf. {\cite[Section~2.1]{Weyman2003}}]
    For a partition $\alpha = (\alpha_1, \dots, \alpha_s)$, the \textit{Schur module} $L_{\alpha}E$ corresponding to the partition $\alpha$ is defined as
    \begin{equation}
        L_{\alpha}E = \bigwedge^{\alpha_1} E \otimes \cdots \otimes \bigwedge^{\alpha_s} E / R(\alpha,E),
    \end{equation}
    where the relation $R(\alpha,E)$ is the sum of subspaces
     \[
      \bigwedge^{\alpha_1} E \otimes \cdots \otimes \bigwedge^{\alpha_{i-1}}E \otimes R_{i,i+1}(E) \otimes \bigwedge^{\alpha_{i+2}} E \otimes \cdots \otimes \bigwedge^{\alpha_s} E
     \]
    for $1 \leq i \leq s-1$, and where
    \[
        R_{i,i+1}(E) \subset \bigwedge^{\alpha_i} E \otimes \bigwedge^{\alpha_{i+1}} E
    \]
    is the subspace generated by the images of the maps
    \begin{equation}
    \begin{aligned}
        & \theta(\alpha, i, u, v;E): \bigwedge^u E \otimes \bigwedge^{\alpha_i-u+\alpha_{i+1}-v} E \otimes \bigwedge^v E \\
        &\qquad \xrightarrow{1 \otimes \Delta \otimes 1} \bigwedge^u E \otimes \bigwedge^{\alpha_i-u} E \otimes \bigwedge^{\alpha_{i+1}-v} E \otimes \bigwedge^v E \\
        &\qquad \xrightarrow{\mu_{12} \otimes \mu_{34}} \bigwedge^{\alpha_i} E \otimes \bigwedge^{\alpha_{i+1}} E
    \end{aligned}
    \end{equation}
    with $u+v < \alpha_{i+1}$.
\end{defn}

\begin{exa} [cf. {\cite[Example~2.1.3]{Weyman2003}}]
    \hfill
    \begin{enumerate} [label=(\alph*)]
        \item If $\alpha = (r)$, $L_{\alpha}E = \bigwedge^r E$.
        \item If $\alpha = (1^r)$, $L_{\alpha}E = \mathrm{Sym}^r E$.
        \item If $\alpha = (p, 1^{q-1})$, then the Schur module $L_{\alpha}E$ is the cokernel of the map
         \[
          \bigwedge^{p+1}E \otimes \mathrm{Sym}^{q-2}E \xrightarrow{\Delta \otimes 1} \bigwedge^p E \otimes E \otimes \mathrm{Sym}^{q-2}E \xrightarrow{1 \otimes \mu} \bigwedge^p E \otimes \mathrm{Sym}^{q-1}E,
         \]
        where $\mu:E \otimes \mathrm{Sym}^{q-2}E \rightarrow \mathrm{Sym}^{q-1}E$ is the natural multiplication map.
    \end{enumerate}
\end{exa}

For a weight $\alpha = (\alpha_1, \dots, \alpha_n)$ with $\alpha_1 \geq \cdots \geq \alpha_n$, define the \textit{Weyl module} on $E$ by
\begin{equation}
    K_{\alpha} E = L_{\bar{\alpha}'} E \otimes \left( \bigwedge^n E \right)^{\otimes \alpha_n},
\end{equation}
where $\bar{\alpha} = (\alpha_1-\alpha_n, \dots, \alpha_{n-1}-\alpha_n, 0)$.
Consider the action of the permutation group $\Sigma_n$ on $\ZZ^n$ given by
\begin{equation}
    (i,i+1) \cdot \alpha = (\alpha_1, \dots, \alpha_{i-1}, \alpha_{i+1}-1, \alpha_i+1, \alpha_{i+2}, \dots, \alpha_n)
\end{equation}
for each $1 \leq i < n$, where $\alpha = (\alpha_1, \dots, \alpha_n) \in \ZZ^n$.
This action can be described as
 \[
  \sigma \cdot \alpha = \sigma(\alpha+\rho) - \rho,
 \]
where
 \[
  \sigma((\alpha_1, \dots, \alpha_n)) = (\alpha_{\sigma(1)}, \dots, \alpha_{\sigma(n)})
 \]
and $\rho = (n-1, \dots, 1, 0)$.
Now we can state Bott's theorem for Grassmannians.

\begin{thm} [Bott-Borel-Weil~theorem, {\cite[Corollary~4.1.9]{Weyman2003}}] \label{thm:Bott-theorem-for-Grassmannians}
    Let $E$ be a $\CC$-vector space of dimension $n$.
    Consider a weight $\alpha = (\alpha_1, \dots, \alpha_n) \in \ZZ^n$ satisfying $\alpha_i \geq \alpha_{i+1}$ for $i \neq r$.
    Let $\beta = (\alpha_1, \dots, \alpha_r)$ and $\gamma = (\alpha_{r+1}, \dots, \alpha_n)$ be two weights, and define
    \begin{equation}
        \Vcal(\alpha) = K_{\beta} \Ucal^{\vee} \otimes K_{\gamma} \Qcal^{\vee}
    \end{equation}
    on the Grassmannian $\Gr(r,E)$.
    Then one of two mutually exclusive possibilities occurs:
    \begin{enumerate}
        \item If $\sigma \cdot \alpha = \alpha$ for some $1 \neq \sigma \in \Sigma_n$, the cohomology groups $H^{i}(\Gr(k,E), \Vcal(\alpha))$ are zero for $i \geq 0$.
        \item If there exists a unique $\sigma \in \Sigma_n$ such that $\sigma \cdot \alpha = \beta$ is nonincreasing, all the cohomology groups $H^{i}(\Gr(k,E), \Vcal(\alpha))$ are zero for $i \neq \ell(\sigma)$, and
         \[
          H^{\ell(\sigma)}(\Gr(k,E), \Vcal(\alpha)) = K_{\beta} E^{\vee}
         \]
        where $\ell(\sigma)$ denotes the minimal length $\ell$ of a reduced expression
        \[
            \sigma = \sigma_1 \dots \sigma_{\ell}
        \]
        with $\sigma_i = (j, j+1)$ for some $1 \leq j < n$.
    \end{enumerate}
\end{thm}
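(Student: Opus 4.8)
The plan is to deduce the statement from the classical Borel--Weil--Bott theorem for line bundles on the full flag variety, via a relative Borel--Weil argument. The crucial structural observation is that the hypothesis $\alpha_i \geq \alpha_{i+1}$ for all $i \neq r$ says precisely that the two sub-weights $\beta = (\alpha_1, \dots, \alpha_r)$ and $\gamma = (\alpha_{r+1}, \dots, \alpha_n)$ are each \emph{dominant} (nonincreasing), even though $\alpha$ itself need not be. I would exploit this by realizing the complete flag variety $\mathrm{Fl}(E)$ as the fiber product $\mathrm{Fl}(\Ucal) \times_{\Gr(r,E)} \mathrm{Fl}(\Qcal)$ over the Grassmannian: refining a given $r$-plane $\Ucal = V_r$ to a complete flag of $E$ amounts to choosing a complete flag of the subspace $\Ucal$ together with a complete flag of the quotient $\Qcal = E/\Ucal$. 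Write $\pi : \mathrm{Fl}(E) \to \Gr(r,E)$ for the resulting projection and $\Lcal_{\alpha}$ for the line bundle on $\mathrm{Fl}(E)$ built from the successive quotients of the universal flag according to the weight $\alpha$.

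First I would compute the derived pushforward $R\pi_{\ast}\Lcal_{\alpha}$. Fiberwise $\pi$ is a product of two complete flag varieties $\mathrm{Fl}(\CC^r) \times \mathrm{Fl}(\CC^{n-r})$, and on each factor the restriction of $\Lcal_{\alpha}$ is the line bundle attached to $\beta$, respectively $\gamma$. Since $\beta$ and $\gamma$ are dominant, relative Borel--Weil (applied along each flag-bundle factor in turn) shows that the higher direct images vanish and that $R^0\pi_{\ast}\Lcal_{\alpha} \simeq K_{\beta}\Ucal^{\vee} \otimes K_{\gamma}\Qcal^{\vee} = \Vcal(\alpha)$. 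Because the higher direct images vanish, the Leray spectral sequence degenerates and yields $H^{i}(\Gr(r,E), \Vcal(\alpha)) \simeq H^{i}(\mathrm{Fl}(E), \Lcal_{\alpha})$ for every $i$.

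It then remains to invoke Borel--Weil--Bott for the line bundle $\Lcal_{\alpha}$ on $\mathrm{Fl}(E) = \GL_n/B$, which produces exactly the advertised dichotomy through the dotted action $\sigma \cdot \alpha = \sigma(\alpha+\rho)-\rho$. The combinatorics are transparent: since $\beta$ and $\gamma$ are nonincreasing, the entries of $\alpha+\rho$ are strictly decreasing within the first $r$ slots and within the last $n-r$ slots, so the only possible failure of regularity is a coincidence between a value from the $\beta$-block and one from the $\gamma$-block. If such a coincidence occurs then $\alpha+\rho$ is fixed by a nontrivial transposition, i.e. $\sigma \cdot \alpha = \alpha$ for some $\sigma \neq 1$, and all cohomology vanishes, which is case (1). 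Otherwise all entries of $\alpha+\rho$ are distinct, there is a unique $\sigma \in \Sigma_n$ interleaving the two decreasing blocks into a single decreasing sequence, and $\ell(\sigma)$ equals the number of cross-block inversions; Borel--Weil--Bott then places the full cohomology in degree $\ell(\sigma)$ with value the $\GL_n$-module $K_{\beta}E^{\vee}$ attached to the sorted weight $\beta = \sigma \cdot \alpha$, which is case (2).

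The main obstacle is the relative Borel--Weil vanishing together with the precise identification of the pushforward with $K_{\beta}\Ucal^{\vee} \otimes K_{\gamma}\Qcal^{\vee}$; this requires the fiberwise cohomology of dominant line bundles on flag varieties and a check that the global twists assemble into that bundle rather than merely an associated graded. Granting this, the remaining input is the classical Borel--Weil--Bott theorem on $\mathrm{Fl}(E)$, and translating the Weyl-group bookkeeping into the stated dichotomy is routine. I note that one can instead argue entirely inside Grassmannians, by induction on the single adjacent transposition at position $r$ using the basic exact sequences relating $\Vcal(\alpha)$ and $\Vcal((r,r{+}1)\cdot\alpha)$, as in \cite{Weyman2003}; I prefer the flag-variety reduction above because it isolates the only genuinely new phenomenon, namely the crossing at position $r$, from the standard theory.
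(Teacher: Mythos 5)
The paper offers no proof of this statement: it is imported verbatim as \cite[Corollary~4.1.9]{Weyman2003}, so there is nothing internal to compare your argument against. Judged on its own merits, your proof is correct and is the standard derivation. The fibration $\pi:\mathrm{Fl}(E)\to\Gr(r,E)$ with fibre $\mathrm{Fl}(\CC^r)\times\mathrm{Fl}(\CC^{n-r})$, the identification $R^0\pi_{\ast}\Lcal_{\alpha}\simeq K_{\beta}\Ucal^{\vee}\otimes K_{\gamma}\Qcal^{\vee}$ with vanishing higher direct images (valid precisely because $\beta$ and $\gamma$ are separately dominant, which is what the hypothesis $\alpha_i\geq\alpha_{i+1}$ for $i\neq r$ encodes), the resulting degeneration of Leray, and the reduction of the dichotomy to whether $\alpha+\rho$ has a repeated entry are all sound; the only coincidences can occur across the two blocks, and the sorting shuffle has length equal to the number of cross-block inversions, as you say. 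The two points you flag as needing care --- that the pushforward is the honest Weyl functor applied to $\Ucal^{\vee}$ and $\Qcal^{\vee}$ rather than an associated graded, and the duality conventions producing $K_{\beta}E^{\vee}$ --- are genuine but routine, settled by the functoriality of relative Borel--Weil over the base. For what it is worth, the cited source proves the general Bott theorem by the inductive exchange argument along $\PP^1$-fibrations (the alternative you mention in your last sentence) and then specializes to the Grassmannian; your flag-bundle reduction front-loads the classical theorem on $\GL_n/B$ and makes the role of the single non-dominant position $r$ more transparent, at the cost of assuming relative Borel--Weil as known. One cosmetic remark: the statement's reuse of the letter $\beta$ in case (2) for the sorted weight $\sigma\cdot\alpha$ clashes with the earlier definition $\beta=(\alpha_1,\dots,\alpha_r)$; your reading (that $K_{\beta}E^{\vee}$ in case (2) refers to the sorted weight) is the intended one.
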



\section{Technical lemmas}\label{sec:technical-lemmas}

\subsection{Weighted projective space.} \label{subsec:weighted-projective-spaces}

There are certain Fano threefolds, namely \hyperlink{Fano1-1}{\textbf{1-1}}, \hyperlink{Fano1-11}{\textbf{1-11}} and \hyperlink{Fano1-12}{\textbf{1-12}}, which can be represented as weighted hypersurfaces.
Hence we need to find a pair of integers $d,a \geq 1$ such that $\Ocal_A(d)$ is section-dominating for $\Ocal_A(a)$ on a big open subset of a weighted projective space $A$.

Let $k \geq 2$ and $2 \leq a_1 \leq \cdots \leq a_{\ell}$ be integers.
Let $V$ be a $k$-dimensional vector space, and let $R_1, \dots, R_{\ell}$ be one-dimensional vector spaces.
Define a $\CC^{\times}$-action on $V \oplus R_1 \oplus \cdots \oplus R_{\ell}$ by
\[
    c \cdot (x, y_1, \dots, y_{\ell}) = (cx, c^{a_1}y_1, \dots, c^{a_{\ell}}y_{\ell}),
\]
for $c \in \CC^{\times}$, $x \in V$, and $y_i \in R_i$ where $1 \leq i \leq \ell$.
The weighted projective space $A = \PP(1^k,a_1,\dots,a_{\ell})$ can be identified with the quotient
\[
    (V \oplus R_1 \oplus \cdots \oplus R_{\ell} \setminus \{ 0 \}) / \CC^{\times}.
\]
Consider the action of a semi-direct product
\[
    G = \left( \prod_{i=1}^{\ell} \Hom(\Sym^{a_{i}}V, R_i) \right) \rtimes \GL(V)
\]
on $A$ given by
\[
    L \cdot [x:y_1:\cdots:y_{\ell}] = [Lx:y_1:\cdots:y_{\ell}]
\]
and
\[
    \lambda_i \cdot [x:y_1:\cdots:y_{\ell}] = [x:y_1:\cdots: y_i+\lambda_i(x) :\cdots:y_{\ell}],
\]
where $L \in \GL(V)$, $x \in V$, $y_j \in R_j$ and $\lambda_i \in \Hom(\Sym^{a_{i}}V, R_i)$.
One can see that the orbit $A_0 = G \cdot p$ of the point $p = [1:0^{k-1}:1^{\ell}]$ is a big open subset of $A$ and the coherent sheaves $\Ocal_A(a)$ are invariant under the action of $G$ (cf. \cite[Proposition~2.6]{HK2015}).

\begin{thm} \label{thm:Almost-section-dominating-on-WPS}
    Under the above setting, assume that $\ell = 1$ or $d = \lcm(a_1, \dots, a_{\ell}) \neq a_{\ell}$.
    Then for any integer $a \geq \max \{d,\ell d - \sum_{i=1}^{\ell} a_i + 1\}$, the line bundle $\Ocal_{A}(d)$ is section-dominating for $\Ocal_{A}(a)$ on $A_0$.
\end{thm}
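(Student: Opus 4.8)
The plan is to verify the three conditions in the definition of a section-dominating family for the single sheaf $L=\Ocal_A(d)$ against $\Ecal=\Ocal_A(a)$. Conditions (1) and (2) are immediate: on the smooth big open orbit $A_0$ every $\Ocal_A(m)$ is invertible, and since $d=\lcm(a_1,\dots,a_\ell)$ the degree-$d$ monomials $x_i^{d}$ and $y_j^{d/a_j}$ are honest sections of $\Ocal_A(d)$ with no common zero, so $\Ocal_A(d)$ is globally generated. The content is condition (3), the surjectivity of the multiplication map. Because $A_0=G\cdot p$ is a single $G$-orbit and the sheaves $\Ocal_A(m)$ are $G$-linearized, the multiplication maps at different points of $A_0$ are intertwined by $G$; hence it suffices to prove surjectivity at the base point $p=[1:0^{k-1}:1^\ell]$.

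Next I would translate everything into the homogeneous coordinate ring $R=\CC[x_1,\dots,x_k,y_1,\dots,y_\ell]$ with $\deg x_i=1$ and $\deg y_j=a_j$. Since $A$ is normal and $p$ lies in its smooth locus (and $\dim A=k+\ell-1\ge 2$), one has $H^0(A,\Ocal_A(m))=R_m$ and $H^0(A,\Ocal_A(m)\otimes\Ical_p)=I_m$, where $I\subset R$ is the homogeneous ideal of $p$ and the subscript is the degree-$m$ graded piece. As $\Ecal[\otimes]L^\vee=(\Ocal_A(a)\otimes\Ocal_A(-d))^{\vee\vee}=\Ocal_A(a-d)$ and $a\ge d$, condition (3) becomes the assertion that $I_d\otimes R_{a-d}\to I_a$ is surjective, i.e.\ that $W:=I_d\cdot R_{a-d}$ equals $I_a$. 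Evaluation at $p$ identifies $R_a/I_a$ with $\CC$ (via $x_1^a\mapsto 1$), so $W\subseteq I_a\subsetneq R_a$ and it is enough to prove $\dim_{\CC}R_a/W\le 1$; equivalently, that modulo $W$ every monomial of degree $a$ is a scalar multiple of $x_1^a$.

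The engine of the reduction is a family of relations. Call a monomial \emph{pure} if it involves only $x_1$ and the $y_j$; every pure monomial of degree $d$ takes the value $1$ at $p$, so the difference of any two pure degree-$d$ monomials lies in $I_d$, as does any degree-$d$ monomial divisible by some $x_i$ with $i\ge 2$. Multiplying by $R_{a-d}$ yields two moves on degree-$a$ monomials modulo $W$: one may replace any degree-$d$ pure sub-monomial of a monomial by any other pure degree-$d$ monomial, and any monomial admitting a degree-$d$ sub-monomial divisible by some $x_i$ ($i\ge2$) is congruent to $0$. I would first dispose of the impure monomials, which is the easier task because the impure variable $x_i$ already supplies degree-$1$ slack: using the conversion relations $x_1^{d}-y_j^{d/a_j}$ and $x_1^{d-a_j}y_j-x_1^{d}$ to trade $y$-content for powers of $x_1$, one raises the $x_1$-exponent until a degree-$d$ sub-monomial $x_i x_1^{d-1}$ can be split off, killing the monomial. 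For pure monomials I would run the same conversion: greedily trade full blocks $y_j^{d/a_j}$ for $x_1^{d}$, reducing to $x_1^{c'}\prod_j y_j^{\gamma_j'}$ with $0\le\gamma_j'<d/a_j$, and then invoke the hypothesis $a\ge \ell d-\sum_{i=1}^{\ell}a_i+1$, which forces $c'\ge 1$, so that at least one $x_1$ survives to pad a genuine degree-$d$ pure sub-monomial containing a remaining $y_j$; swapping it for $x_1^{d}$ lowers the $y$-content, and iterating brings every pure monomial to $x_1^a$.

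The main obstacle is exactly this last step for the near-extremal pure monomials such as $x_1\prod_j y_j^{d/a_j-1}$: one must guarantee that a degree-$d$ pure sub-monomial containing some $y_j$ exists, which reduces to solving the bounded subset-sum equation $s+\sum_j a_j t_j=d$ with $s\in\{0,1\}$ and $0\le t_j<d/a_j$. All three numerical hypotheses enter here: $d=\lcm(a_1,\dots,a_\ell)$ makes the blocks $y_j^{d/a_j}$ integral and supplies enough common multiples, the bound $a\ge \ell d-\sum_{i=1}^{\ell}a_i+1$ provides the extra slack $s=1$ needed to repair the subset-sum, and the condition $d\ne a_\ell$ (only imposed when $\ell\ge 2$) excludes the degenerate configuration in which $R_{a-d}$ carries no $y$-monomial small enough to run a reduction. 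I would treat the case $\ell=1$ separately and first as a warm-up: there $d=a_1$, so $y_1$ is itself a degree-$d$ variable and single factors $y_1$ convert to $x_1^{a_1}$ directly, bypassing the subset-sum analysis entirely.
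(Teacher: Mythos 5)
Your framing of the problem agrees with the paper's: reduce to the base point $p$ by $G$-equivariance, identify condition (3) with surjectivity of $I_d\otimes R_{a-d}\to I_a$, and manipulate monomials using the two moves (swap one pure degree-$d$ sub-monomial for another; kill any monomial with a degree-$d$ sub-monomial divisible by some $x_i$, $i\ge 2$). The paper packages the same manipulations as an induction on the leading term for a graded lexicographic order, so up to reformulation the approaches coincide. The gap is exactly at the step you flag as the main obstacle: after the greedy normalization to $x_1^{c'}\prod_j y_j^{\gamma_j'}$ with $\gamma_j'<d/a_j$ and $c'\ge 1$, you must exhibit a degree-$d$ pure divisor containing some $y_j$, and the bounded subset-sum claim you reduce this to (solvability of $s+\sum_j a_jt_j=d$ with $s\in\{0,1\}$, $0\le t_j\le\gamma_j'$, some $t_j>0$) is asserted rather than proved. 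Worse, it is not solvable in general under the stated hypotheses: take $(a_1,a_2)=(4,6)$, so $d=12\neq a_\ell$, and $a=\ell d-\sum a_i+1=15$. The monomial $x_1y_1^2y_2$ has degree $15$, satisfies $\gamma_j'<d/a_j$ and $c'=1$, and by inspection admits \emph{no} degree-$12$ divisor at all (the attainable sub-degrees $e_0+4t_1+6t_2$ with $e_0\le 1$, $t_1\le 2$, $t_2\le 1$ skip $12$). Since every degree-$3$ monomial is a cubic in the $x_i$ and none divides $x_1y_1^2y_2$, the element $x_1y_1^2y_2-x_1^{15}\in I_{15}$ cannot lie in $I_{12}\cdot R_3$; so this configuration is a genuine obstruction that no rearrangement of your moves can circumvent, and the ``extra slack $s=1$'' does not repair the subset-sum once two weights share a common factor.

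Even in the situations where the conclusion does hold (pairwise coprime weights, and in particular the three cases $\PP(1^4,3)$, $\PP(1^3,2,3)$, $\PP(1^4,2)$ that the paper actually uses), the upper bounds $\gamma_j'<d/a_j$ together with $c'\ge 1$ are not enough information to solve $s+\sum_j a_jt_j=d$: you also need \emph{lower} bounds on the exponents $\gamma_j'$. Producing those is where essentially all of the paper's work lives --- a chain of multiplicity estimates showing that $t_j^{d/a_j-1}$ must divide the offending leading term for every $j$, a pigeonhole argument bounding the multiplicity of $s_1$, a separate construction when $\gcd(a_i,a_j)>1$, and finally the coin-problem Lemma~\ref{lem:coin-problem} to manufacture a mixed degree-$d$ divisor $s_1t_1^{\alpha d/(a_1a_\ell)}t_\ell^{\beta}$ with $\alpha\tfrac{d}{a_\ell}+\beta a_\ell=d-1$ in the pairwise-coprime case. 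None of this appears in your outline, and your stated intention to handle it with ``$s\in\{0,1\}$ of slack'' is not a viable substitute. The proposal is a correct setup that stops short of the combinatorial content carrying the theorem.
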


\begin{proof}
    The statement is direct when $\ell = 0$ (i.e., $A$ is the projective space $\PP^{k-1}$), so assume that $\ell \geq 1$.
    By the invariance, we only need to see that the multiplication map
    \[
        \Phi: H^0(\Ocal(d) \otimes \Ical_{p}) \otimes H^{0}(\Ocal(a-d)) \rightarrow H^0(\Ocal(a) \otimes \Ical_{p})
    \]
    is surjective.
    Note that we have $\Ocal(a) [\otimes] \Ocal(b) \simeq \Ocal(a+b)$, while $\Ocal(a) \otimes \Ocal(b) \not\simeq \Ocal(a+b)$ in general (cf. \cite[Pathologies~1.5]{Dolgachev1982}).

    Let $s_1, \dots, s_k$ be linear coordinates on $V$, and $t_i$ a linear coordinate on $R_i$.
    In order to verify that the map $\Phi$ is surjective, give a graded lexicographic order on $\CC[s_1, \dots, s_k, t_1, \dots, t_{\ell}]$ such that $s_1 < \dots < s_k$ and
    \[
        f(s_1, \dots, s_k, t_1, \dots, t_{i-1}) < t_i
    \]
    for any monomial $f$ of weighted degree $a_i$.
    For a nonzero polynomial $f \in H^0(\Ocal(a) \otimes \Ical_{p})$, we will use the induction on the leading term $\mathrm{lt}(f)$ of $f$ to show that $f$ is contained in the image of the map $\Phi$.
    
    First, we assume that $\mathrm{lt}(f)$ is divisible by none of $s_i$ for $2 \leq i \leq k$.
    If $\mathrm{lt}(f)$ is not divisible by $t_i$'s, then $f$ itself is a multiple of $s_1^{a}$, which does not vanish at $p$.
    Hence $\mathrm{lt}(f)$ is divisible by some $t_i$.

    We claim that $\mathrm{lt}(f)$ is divisible by a monomial of degree $d$.
    Once we prove this assertion, one can write $\mathrm{lt}(f) = gq$ for some monomials $g$ and $q$ of degree $a-d$ and $d$, respectively, with $q$ monic.
    If $q \neq s_1^{d}$, then $q - s_1^d$ is a nonzero element of $H^0(\Ocal(d) \otimes \Ical_p)$.
    So one can replace $f$ by $f - g(q-s_1^d)$ to reduce the leading term.
    If $q = s_1^d$, then we may replace $g$ and $q$ by $g' = s_1^{a_i}(g/t_i)$ and $q' = s_1^{d-a_i}t_i$, respectively, for some dividend $t_i$ of $\mathrm{lt}(f)$.
    Then one can replace $f$ by $f - g(q-s_1^d)$ as before.

    To prove the claim, let $q$ be a monomial of the largest degree among the monic monomials of degree at most $d$ that divide $\mathrm{lt}(f)$ but that are not divisible by $s_1$.
    We are done if $\deg q = d$, so assume that $\deg q < d$.
    Write $\mathrm{lt}(f) = g q$ for some monomial $g$.
    If $g$ is divisible by $s_1^{d-\deg q}$, then $\mathrm{lt}(f)$ is divisible by $s_1^{d-\deg q}q$.
    This is obviously satisfied when $\ell = 1$, so assume that $\ell \geq 2$.
    If $s_1$ does not divide $\mathrm{lt}(f)$, the pigeonhole principle yields $t_i^{d/a_i} \mid \mathrm{lt}(f)$ for some $i$.
    This contradicts $\deg q < d$, so $s_1$ divides $\mathrm{lt}(f)$.

    Now assume that $s_1^{d-\deg q}$ does not divide $\mathrm{lt}(f)$.
    Let $\alpha$ be the multiplicity of $s_1$ in $\mathrm{lt}(f)$.
    Note that $g$ is divisible by some $t_i$.
    For such an $i$, we have $\deg t_i q > d$ by maximality, i.e., $d-\deg q < a_i$.
    If $t_i^{d/a_i-1} \nmid \mathrm{lt}(f)$, then $\alpha$ is at least
     \[
      \left( \ell \cdot d - \sum_{j=1}^{\ell} a_j + 1 \right) - \sum_{j \neq i} \left( \frac{d}{a_j}-1 \right) a_j - \left( \frac{d}{a_i}-2 \right) a_i = a_i + 1,
     \]
    i.e., $d-\deg q > \alpha \geq a_i+1$, a contradiction.
    A similar argument says that $t_j^{d/a_j-1} \mid \mathrm{lt}(f)$ if $a_j \geq a_i$.
    In particular, $t_{\ell}^{d/a_{\ell}-1}$ divides $\mathrm{lt}(f)$ and $\alpha < a_{\ell}$.
    
    Let $\alpha_1$ and $\alpha_2$ be the multiplicities of $t_1$ and $t_2$ in $\mathrm{lt}(f)$, respectively.
    As above, we infer that
     \begin{align*}
      \alpha &\geq \left( \ell \cdot d - \sum_{j=1}^{\ell} a_j + 1 \right) - \alpha_1 a_1 - \alpha_2 a_2 - \sum_{j \geq 3} \left( \frac{d}{a_j}-1 \right) a_j \\
        &= 2d - \left( \alpha_1 + 1 \right) a_1 - \left( \alpha_2 + 1 \right) a_2 + 1.
     \end{align*}
    We claim that $t_{j}^{d/a_j - 1}$ divides $\mathrm{lt}(f)$ for $j \geq 2$.
    This is satisfied when either $t_1$ or $t_2$ divides $g$, so suppose that both do not divide $g$.
    Since $t_1^{\alpha_1}t_2^{\alpha_2} \mid q$, we have
     \[
      (2d - \left( \alpha_1 + 1 \right) a_1 - \left( \alpha_2 + 1 \right) a_2+1) + \alpha_1 a_1 + \alpha_2 a_2
        = 2d-a_1-a_2 + 1 < d,
     \]
    which is impossible because $a_1,a_2 \leq \frac{d}{2}$.
    
    Now observe that $\alpha_1$ is at least $\frac{d}{6a_1}$.
    Otherwise, from the inequality
     \[
      \alpha \geq d - (\alpha_1 + 1)a_1 + 1 > \frac{5d}{6} - a_1,
     \]
    we get $a_1 > \frac{d}{3}$ as $\alpha < a_{\ell} \leq \frac{d}{2}$.
    This is not the case.
    Consider the monomial $q' = t_1^{\beta}t_{\ell}^{d/a_{\ell}-1}$ of degree at most $d$ with $1 \leq \beta \leq \alpha_1$ maximal.
    Then $q'$ divides $\mathrm{lt}(f)$, so $\deg q' \leq \deg q$ and $\alpha \leq d-\frac{d}{2}-\frac{d}{6} = \frac{d}{3}$.
    Applying the above procedure repeatedly, we get $\alpha_1 \geq \frac{d}{2a_1}$.
    Since the monomial $t_1^{\alpha_1}t_{\ell}^{d/a_{\ell}-1}$ has degree at least $d$, $\alpha$ is strictly less than $a_1$.
    We infer that
    \[
        d - (\alpha_1+1)a_1 + 1 \leq \alpha < a_1,
    \]
    or
    \[
        \alpha_1 > \frac{d}{a_1} + \frac{1}{a_1} -2.
    \]
    Hence one can conclude that $t_i^{d/a_i-1} \mid \mathrm{lt}(f)$ for all $1 \leq i \leq \ell$.

    We are now able to derive a contradiction.
    If $k = \gcd(a_i, a_j) > 1$ for some $1 \leq i < j \leq \ell$, then the monomial
     \[
      q' = t_i^{\frac{1}{k} \cdot \frac{d}{a_i}} t_j^{\frac{k-1}{k} \cdot \frac{d}{a_j}}
     \]
    divides $\mathrm{lt}(f)$ and $\deg(q') = d$, which is a contradiction.
    Suppose that $a_1, \dots, a_{\ell}$ are pairwise coprime.
    Let us introduce the following classical result:

\begin{lem} \label{lem:coin-problem}
    Let $x, y \in \ZZ_{\geq 2}$ be coprime integers. Let
     \[
      a \equiv -x^{-1}\, (\text{mod } y), \qquad b \equiv -y^{-1}\, (\text{mod } x)
     \]
    for some $0 < a < y$ and $0 < b < x$.
    Then $ax + by = xy - 1$.
\end{lem}

\begin{proof}
    We give a proof for the sake of completeness.
    As $x$ and $y$ are coprime, there exist integers $c,d \in \ZZ$ such that $cx+dy = xy-1$.
    We may assume that $0 < c < y$.
    Then $0 < dy = (y-c)x-1 < xy$, so $0 < d < x$.
    Clearly $a = c$ and $b = d$.
\end{proof}

    Choose $0 < \alpha < a_{\ell}$ and $0 < \beta < \frac{d}{a_{\ell}} = a_1 \cdots a_{\ell-1}$ such that
     \[
      \alpha \cdot \frac{d}{a_{\ell}} + \beta \cdot a_{\ell} = d - 1
     \]
    and consider the monomial
     \[
      q' = s_1 t_1^{\alpha \cdot \frac{d}{a_1 a_{\ell}}} t_{\ell}^{\beta}.
     \]
    It divides $\mathrm{lt}(f)$ and $\deg(q') = d$, whence a contradiction.

    Finally, if $\mathrm{lt}(f)$ is divisible by $s_i$ for some $2 \leq i \leq k$, a similar argument shows that $\mathrm{lt}(f)$ is divisible by a (monic) monomial $q$ of degree $d$ divisible by $s_i$.
    Then we can replace $f$ by $f - \mathrm{lt}(f)$.
\end{proof}

\begin{rmk}
    When $\ell = 2$ and $a_1$, $a_2$ are coprime, the above result is optimal.
    Using the notation above, $t_1^{a_2-1}t_2^{a_1-1} - s_1^{2a_1a_2-a_1-a_2}$ is a section of $\Ocal_{X}(2a_1a_2-a_1-a_2)$ vanishing at $p$, which is not contained in the image of $\Phi$.
\end{rmk}

\subsection{Cohomology of homogeneous vector bundles.} \label{subsec:cohomology-of-homogeneous}

In this section, we prove the vanishing of cohomology groups of certain homogeneous vector bundles on the Grassmannian.
The following theorem will be useful when dealing with Fano threefolds \hyperlink{Fano1-8}{\textbf{1-8}}, \hyperlink{Fano1-9}{\textbf{1-9}} and \hyperlink{Fano1-10}{\textbf{1-10}}.

\begin{thm} \label{thm:Cohomology-vanishing-of-homogeneous}
    Let $n$ and $k$ be integers with $n \geq 2$, $1 \leq k \leq \frac{n}{2}$.
    For a given sequence of integers
    \[
        n-k \geq i_1 \geq \cdots \geq i_{k-1} \geq i_k = 0
    \]
    and integers $i \geq 0$ and $d > 0$, if $i_{\ell-1} - i_{\ell} < n-k$ for any $\ell$, we have
    \[
        H^i(\mathrm{Gr}(k,n), K_{(i_1, \dots, i_k)} \Ucal^{\vee}(-d)) \neq 0 
    \]
    if and only if $i = k(n-k)$ and $d \geq n+i_1$.
    If
    \[
        i_1 = \dots = i_{\ell} = n-k, \quad i_{\ell+1} = \dots = i_{n-k} = 0
    \]
    for some $0 \leq \ell < k$,
    \[
        H^{i}(\mathrm{Gr}(k,n), K_{((n-k)^{\ell})}\Ucal^{\vee}(-d)) \neq 0
    \]
    if and only if
    \[
        \begin{cases} i = (n-k)(k-\ell), \, d = n-\ell, \\ i = k(n-k), \, d \geq n, \end{cases}
    \]
    and in the former case,
    \[
        H^{(n-k)(k-\ell)}(\mathrm{Gr}(k,n), K_{((n-k)^{\ell})}\Ucal^{\vee}(-(n-\ell))) \simeq \CC.
    \]
    Similarly, for a given sequence of integers
    \[
        k \geq j_1 \geq \cdots \geq j_{n-k-1} \geq j_{n-k} = 0
    \]
    and integers $i, d \geq 0$, if $j_{\ell-1} - j_{\ell} < k$ for any $\ell$, then
    \[
        H^i(\mathrm{Gr}(k,n), K_{(j_1, \dots, j_{n-k})} \Qcal^{\vee}(-d)) \neq 0
    \]
    if and only if $i = k(n-k)$ and $d \geq n$.
    If
    \[
        j_1 = \dots = j_{\ell} = k, \quad j_{\ell+1} = \dots = j_{n-k} = 0
    \]
    for some $0 \leq \ell < n-k$,
    \[
        H^{i}(\mathrm{Gr}(k,n), K_{(k^{\ell})}\Qcal^{\vee}(-d)) \neq 0
    \]
    if and only if
    \[
        \begin{cases} i = k\ell, \, d = \ell, \\ i = k(n-k), \, d \geq n, \end{cases}
    \]
    and in the former case,
    \[
        H^{k\ell}(\mathrm{Gr}(k,n), K_{(k^{\ell})}\Qcal^{\vee}(-\ell)) \simeq \CC.
    \]
\end{thm}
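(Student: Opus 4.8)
The plan is to reduce everything to the Bott--Borel--Weil theorem (Theorem~\ref{thm:Bott-theorem-for-Grassmannians}), which computes the cohomology of $\Vcal(\alpha) = K_\beta \Ucal^\vee \otimes K_\gamma \Qcal^\vee$ in terms of the dotted action of $\Sigma_n$ on the weight $\alpha$. The key translation is that the Pl\"ucker twist $\Ocal(-d)$ shifts the weight by $-d$ in every coordinate, and that $K_{(i_1,\dots,i_k)}\Ucal^\vee(-d)$ corresponds to the weight
\[
    \alpha = (i_1 - d, \dots, i_k - d, \underbrace{-d, \dots, -d}_{n-k}),
\]
so that $\beta = (i_1-d, \dots, i_k-d)$ is the $\Ucal^\vee$-part and $\gamma = (-d, \dots, -d)$ is the (constant, hence automatically nonincreasing) $\Qcal^\vee$-part. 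First I would set up this dictionary carefully, recording $\rho = (n-1, \dots, 1, 0)$ and the shifted weight $\alpha + \rho$, whose entries are $i_\ell - d + (n-\ell)$ in the first block and $-d + (n-k-m)$ in the second.

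The heart of the argument is the trichotomy of Bott: either two entries of $\alpha+\rho$ coincide (then all cohomology vanishes), or they are all distinct and a unique permutation $\sigma$ sorts them into strictly decreasing order, in which case the only nonvanishing cohomology sits in degree $\ell(\sigma)$, equal to the number of inversions $\sigma$ must undo. So I would analyze precisely when an entry from the $\Ucal^\vee$-block and an entry from the $\gamma$-block collide or interleave. The generic (first) case, with the strict-difference hypothesis $i_{\ell-1}-i_\ell < n-k$, is designed exactly so that the $\Ucal^\vee$-entries $i_\ell - d + n - \ell$ can never drop below the top $\gamma$-entry without first colliding with it: the hypothesis forces the $\beta$-block entries to stay above the $\gamma$-block entries for small $d$, giving vanishing of intermediate cohomology, and once $d$ is large enough ($d \geq n + i_1$) every $\beta$-entry sinks strictly below every $\gamma$-entry, so the full block of $k(n-k)$ inversions must be undone and $H^{k(n-k)}$ is the unique survivor. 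I would verify the collision bookkeeping by comparing consecutive shifted entries and invoking $i_\ell - i_{\ell+1} < n-k$ to rule out equalities in the intermediate range of $d$.

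The degenerate cases, where $\beta$ is a rectangle $((n-k)^\ell)$ (equivalently the $\Qcal^\vee$-rectangle $(k^\ell)$ by the standard $\Ucal \leftrightarrow \Qcal$ duality on $\Gr(k,n) \cong \Gr(n-k,n)$), are where a genuinely \emph{new} nonzero cohomology class appears in an intermediate degree. Here the specific value $d = n - \ell$ (resp. $d = \ell$) is the unique twist at which $\sigma \cdot \alpha$ lands on a one-dimensional weight, so $H^{(n-k)(k-\ell)} \simeq \CC$; I would identify $\sigma$ explicitly as the cyclic shuffle moving the $\ell$ top entries past the $\gamma$-block and count its length to pin down the cohomological degree $(n-k)(k-\ell)$, then check $K_\beta E^\vee$ is one-dimensional for that $\beta$. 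The $\Qcal^\vee$ statements follow by the duality $\Ucal \cong \Qcal^\vee \otimes (\textstyle\bigwedge^n)$ combined with $\Gr(k,n) \cong \Gr(n-k,n)$, so I would derive them formally from the $\Ucal^\vee$ case rather than redo the weight analysis. The main obstacle I anticipate is the careful inversion-counting in the rectangular degenerate case: making sure the length $\ell(\sigma)$ of the sorting permutation is exactly $(n-k)(k-\ell)$ and that no other value of $d$ accidentally produces a nonsingular (hence cohomology-carrying) weight requires a clean, case-free argument about how a rectangular $\beta$ interacts with the constant $\gamma$-block under the dotted action.
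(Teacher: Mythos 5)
Your overall strategy is the same as the paper's: both proofs run the weight $\alpha$ through the Bott--Borel--Weil trichotomy and track when the dotted $\Sigma_n$-action either fixes $\alpha$ (total vanishing) or sorts it with a permutation of computable length. However, your dictionary between the twist and the weight contains a genuine error that breaks the rest of the argument as written. You translate $K_{(i_1,\dots,i_k)}\Ucal^{\vee}(-d)$ into the weight $(i_1-d,\dots,i_k-d,-d,\dots,-d)$, i.e.\ you subtract $d$ from \emph{every} coordinate. But subtracting a constant from all $n$ coordinates tensors the bundle by $(\det\Ucal^{\vee})^{-d}\otimes(\det\Qcal^{\vee})^{-d}=\Ocal(-d)\otimes\Ocal(d)=\Ocal$, so your weight represents the \emph{untwisted} bundle $K_{(i_1,\dots,i_k)}\Ucal^{\vee}$. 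The correct translation puts the twist in a single block: $\Ocal(-d)=(\det\Qcal^{\vee})^{\otimes d}$ gives $\alpha=(i_1,\dots,i_k,d^{\,n-k})$ (the paper's choice), or equivalently $(i_1-d,\dots,i_k-d,0^{\,n-k})$. This is not a cosmetic slip: with your weight the gap between a first-block entry $i_\ell-d+(n-\ell)$ of $\alpha+\rho$ and a second-block entry $-d+(n-k-m)$ is $i_\ell+k+m-\ell$, which is independent of $d$, so the collision/crossover analysis you describe cannot produce any $d$-dependent conclusion — in particular the thresholds $d\geq n+i_1$, $d=n-\ell$, $d\geq n$ are unreachable from your setup. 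Once the weight is corrected, the mechanism you describe (the hypothesis $i_{\ell-1}-i_\ell<n-k$ forcing a collision before the $\gamma$-block can interleave, and full inversion of all $k(n-k)$ pairs exactly when $d\geq n+i_1$) is the right one and is essentially what the paper carries out.

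Two smaller remarks. First, the paper does not deduce the $\Qcal^{\vee}$ statements by the duality $\Gr(k,n)\cong\Gr(n-k,n)$; it reruns the weight analysis on $\beta=(0^k,j_1+d,\dots,j_{n-k}+d)$. Your duality shortcut is legitimate in principle, but note that $K_{(j_1,\dots,j_{n-k})}\Qcal^{\vee}$ transports to $K_{(-j_{n-k},\dots,-j_1)}\Ucal^{\vee}$ on $\Gr(n-k,n)$, which must be renormalized by a further Pl\"ucker twist to land in the form covered by the first half, and the roles of $k$ and $n-k$ swap; this bookkeeping is comparable in length to simply redoing the computation. Second, for the rectangular cases you would still need to verify that the sorting permutation has length exactly $(n-k)(k-\ell)$ (resp.\ $k\ell$) and that $K_{\sigma\cdot\alpha}E^{\vee}$ is a power of $\det E^{\vee}$, hence one-dimensional; you flag this correctly as the delicate point, and the paper handles it by exhibiting $\sigma$ as an explicit product of cycles.
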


\begin{proof}
    Let $E = \CC^n$ be an $n$-dimensional vector space and regard $\Gr(k,n)$ as $\Gr(k,E)$.
    Note that
    \[
        \Vcal((i_1, \dots, i_k, d^{n-k})) = K_{(i_1, \dots, i_k)} \Ucal^{\vee} \otimes \left( \det \Qcal^{\vee} \right)^{\otimes d} = K_{(i_1, \dots, i_k)} \Ucal^{\vee}(-d).
    \]
    Let $\alpha = (i_1, \dots, i_k, d^{n-k})$ for simplicity.
    Then $\alpha$ is not a partition as $d \geq 1$.
    If $d = 1$, then $\alpha_{k+1} = \alpha_k+1$, so $(k,k+1) \cdot \alpha = \alpha$.
    Hence by Bott's theorem~\ref{thm:Bott-theorem-for-Grassmannians}, all the cohomology groups vanish.
    
    For $d \geq 2$, in order to find $\sigma \in \Sigma_n$ with $\sigma \cdot \alpha$ nonincreasing, we first get rid of zero at the $k$-th entry:
    \[
        (k,k+1) \cdot \alpha = (i_1, \dots, i_{k-1}, d-1, 1, d^{n-k-1}).
    \]
    If $d = 2$, then
    \[
        (k,k+2) \cdot \alpha = (k,k+1)(k+1,k+2)(k,k+1) \cdot \alpha = \alpha,
    \]
    so all the cohomology groups vanish.
    By the same argument, all the cohomology groups vanish when $d < n-k+1$, and one can proceed from
    \[
        \alpha' = (k, k+1, \dots, n) \cdot \alpha = (i_1, \dots, i_{k-1}, (d-1)^{n-k}, n-k)
    \]
    with $d \geq n-k+1$.
    If $\alpha'$ itself is a partition, i.e., $i_1 = \cdots = i_{k-1} = n-k$ and $d = n-k+1$, then $H^i(K_{((n-k)^{k-1})} \Ucal^{\vee}(-(n-k+1))) \neq 0$ if and only if $i = n-k$, and in this case
    \[
        H^{n-k}(K_{((n-k)^{k-1})} \Ucal^{\vee}(-(n-k+1))) = (\det E)^{\otimes n-k} \simeq \CC.
    \]
    
    Similarly, one can show that the cohomology groups do not vanish except for the top ones if and only if $i_1 = \dots = i_{\ell} = n-k$ and $i_{\ell+1} = \dots = i_{k} = 0$ for some $0 \leq \ell < k$, and in this case
    \[
        H^{(n-k)(k-\ell)}(K_{((n-k)^{\ell})} \Ucal^{\vee}(-(n-\ell))) = (\det E)^{\otimes n-k} \simeq \CC.
    \]
    On the other hand, if $i_{\ell-1}-i_{\ell} < n-k$ for any $\ell$, one obtains a partition
    \[
        \sigma \cdot \alpha = ((d-k)^{n-k}, i_1+n-k, \dots, i_{k}+n-k)
    \]
    if and only if $d \geq n+i_1$, where
    \[
        \sigma = (1, 2, \dots, n-k+1)(2,3, \dots, n-k+2) \dots (k, k+1, \dots, n).
    \]
    One may observe that $\ell(\sigma') = k(n-k)$.

    For the second statement, notice that
    \[
        \Vcal((0^{k}, j_1+d, \dots, j_{n-k}+d)) = K_{(j_1+d, \dots, j_{n-k}+d)} \Qcal^{\vee} = K_{(j_1, \dots, j_{n-k})} \Qcal^{\vee}(-d).
    \]
    Let $\beta = (0^{k}, j_1+d, \dots, j_{n-k}+d)$.
    Again, we aim to find $\sigma \in \Sigma_n$ with $\sigma \cdot \beta$ nonincreasing.
    When $\beta$ itself is a partition, i.e., $d = 0$ and $j_1 = \dots = j_{n-k} = 0$, then $H^i(K_{\mathrm{(0)}}\Qcal^{\vee}) = H^{i}(\Ocal) \neq 0$ if and only if $i = 0$, and in this case $H^0(\Ocal) = \CC$.
    If $\beta$ is not a partition, then we first get rid of zeros at the first $k$-th entries.
    As above, all the cohomology groups vanish when $d < k-j_1+1$, and one can proceed from
    \[
        \beta' = (1, 2, \dots, k+1) \cdot \beta = (j_1+d-k, 1^k, j_2+d, \dots, j_{n-1}+d)
    \]
    with $d \geq k-j_1+1$.
    If $\beta'$ itself is a partition, i.e., $d = 1$, $j_1 = k$ and $j_2 = \dots = j_{n-k} = 0$, then $H^i(K_{(k)}\Qcal^{\vee}(-1)) = H^i(\mathrm{Sym}^k \Qcal^{\vee}(-1)) \neq 0$ if and only if $i = k$, and in this case
    \[
        H^k(\mathrm{Sym}^k\Qcal^{\vee}(-1)) = \det E \simeq \CC.
    \]
    
    In the same fashion, one can prove that the cohomology groups do not vanish except for the top ones if and only if $j_1 = \dots = j_{\ell} = k$ and $j_{\ell+1} = \dots = j_{n-k} = 0$ for some $0 \leq \ell < n-k$, and in this case
    \[
        H^{k\ell}(K_{(k^{\ell})}\Qcal^{\vee}(-\ell)) = (\det E)^{\otimes \ell} \simeq \CC
    \]
    is the only nonzero cohomology group.
    On the other hand, if $j_{\ell-1} - j_{\ell} < k$ for any $\ell$, one obtains a partition
    \[
        \sigma \cdot \beta = (j_1+d-k, \dots, j_{n-k}+d-k, (n-k)^k)
    \]
    if and only if $d \geq n$, where
    \[
        \sigma = (n-k, n-k+1, \dots, n)(n-k-1, n-k, \dots, n-1) \dots (1, 2, \dots, k+1).
    \]
    One can observe that $\ell(\sigma) = k(n-k)$, whence the assertion.
\end{proof}


\section{Proof of main theorem~\ref{thm:main-theorem}}\label{sec:main-results}

In this section, we prove our main theorem \ref{thm:main-theorem}.
Here, the assumption that Fano threefolds are general is necessary.
Indeed, we use Mukai's classification of general Fano threefolds as the zero loci of regular sections of homogeneous vector bundles on the product of (weighted) Grassmannians (cf. \cite{Mukai1989}, see also \cite{CCGK2016} and \cite{DBFT2021}).
We also write Iskovskikh's classical description of Fano threefolds with Picard number one for the comparison (cf. \cite{Iskovskikh1978}).
Observe that del Pezzo surfaces are uniruled and K3 surfaces have infinitely many rational curves by \cite[Theorem~A]{CGL2022}, so in particular they are not algebraically hyperbolic.

From now on, let $S \in |\Ocal_X(a)|$ be a very general surface with $a \geq r+1$.
It is easy to see that the Assumptions \hyperlink{Assump1}{\textbf{1}} and \hyperlink{Assump2}{\textbf{2}} are satisfied throughout all the deformation types of $X$.
Since $S$ is very general, we have $\mathrm{Pic}(S) \simeq \ZZ\langle \Ocal_S(1) \rangle$ by the generalized Noether-Lefschetz theorem \cite[Theorem~1]{RS2009}.
Let $f:C \rightarrow S$ is a map from a smooth curve $C$ of genus $g$ that is birational onto its image.

\vspace{8pt} \noindent \hypertarget{Fano1-1}{\textbf{Fano 1-1.}} Double cover of $\PP^3$ branched over a sextic surface. \\
\noindent \textit{Description:} $\Lfr(\Ocal(6)) \subset \PP(1^4, 3)$.

Note that for a weighted projective space $w\PP$, the intermediate cohomology group $H^i(w\PP, \Ocal_{w\PP}(d))$ always vanishes for $0 < i < \dim w\PP$ and $d \in \ZZ$ by \cite[p.39, Theorem]{Dolgachev1982}, so the condition~\ref{eqn:restriction-map-is-surjective} (or the Assumption \hyperlink{Assump3}{\textbf{3}}) is satisfied.

We have an action of $G = \Hom(\Sym^3 \CC^4, \CC) \rtimes \GL_4$ on $A = \PP(1^4,3)$ as in Section~\ref{subsec:weighted-projective-spaces}.
This $G$-action has a dense orbit $A_0 = A \setminus \{ [0^4:1] \}$.
By Proposition~\ref{prop:basic-diagrams}, we have a surjection $(M_{\Ocal_{A}(6)} \oplus M_{\Ocal_{A}(a)})|_C \rightarrow N_{f/S}$.
If $a \geq 3$, the line bundle $\Ocal_{A}(3)$ is section-dominating for $\Ocal_{A}(a)$ on $A_0$ by Theorem~\ref{thm:Almost-section-dominating-on-WPS}, which yields a generic surjection $M_{\Ocal_{A}(3)}|_C \rightarrow N_{f/S}$ as $N_{f/S}$ has rank one.
From Proposition~\ref{prop:Mukai-bundle-degree}, we infer that
 \[
  2g-2 \geq (C.K_S)- \deg_C \Ocal_{A}(3) = (a-4) \deg_C \Ocal_A(1).
 \]
Hence we obtain the desired estimate provided $a > 4$.

For $a = 3$ or $4$, the sheaf $M_{\Ocal_{A}(3)}|_C$ admits a rank one quotient $Q \subset N_{f/S}$.
Say $Q$ has degree $q$ with respect to $\Ocal_A(3)$.
Consider the embedding $A \hookrightarrow \PP^n$ induced by the very ample line bundle $\Ocal_A(3)$.
By the scroll argument \ref{lem:surface-scroll}, there is a surface scroll $\Sigma \subset \PP^n$ whose $\Ocal_{\PP^n}(1)$-degree is equal to
\[
    q + \deg_C \Ocal_{\PP^n}(1) = q + 3\deg_C \Ocal_A(1).
\]

Note that there is a quadric $Z \in |\Ocal_{\PP^n}(2)|$ with $X = A \cap Z$ as the restriction map $H^0(\PP^n, \Ocal_{\PP^n}(2)) \rightarrow H^0(A, \Ocal_A(6))$ is surjective.
If $\Sigma \not\subset Z$, from $C \subset Z \cap \Sigma$ we have
\[
    2(q+3 \deg_C \Ocal_A(1)) \geq 3\deg_C \Ocal_A(1)
\]
or
\[
    q \geq -\frac{3}{2} \deg_C \Ocal_A(1).
\]
Thus we have
\[
    2g(C)-2 \geq (C.K_S) + q \geq \left( a - \frac{5}{2} \right) \deg_C \Ocal_A(1).
\]
This yields the desired estimate when $a \geq 3$.
The computation performed using Sage confirms that $A$ is defined by quadrics in $\PP^n$, so the same argument can be applied when $\Sigma \not\subset A$.

Now assume that $\Sigma \subset X$.
Since $S$ is of general type, it is not a scroll, so $S \neq \Sigma$.
Thus from $C \subset S \cap \Sigma$, we infer that
\[
    \frac{a}{3}(q+3\deg_C \Ocal_{A}(1)) \geq 3\deg_C \Ocal_{A}(1).
\]
Hence we get
\[
    2g(C)-2 \geq \left( a + \frac{9}{a} - 4 \right) \deg_C \Ocal_A(1) \geq 2 \deg_C \Ocal_A(1).
\]
In conclusion, $S$ is algebraically hyperbolic if $a \geq 3$.
The remaining case is $a = 2$.

\vspace{8pt} \noindent \hypertarget{Fano1-2}{\textbf{Fano 1-2.}} Quartic threefold. \\
\noindent \textit{Description:} $\Lfr(\Ocal(4)) \subset \PP^4$.

Since $\Ocal_A(1)$ is section-dominating for $\Ocal_A(k)$ if $k \geq 1$, we have
 \[
  2g-2 \geq (C.K_S) - \deg_C \Ocal_A(1) = (a-2) \deg_C \Ocal_A(1).
 \]
This gives the desired estimate when $a > 2$.

The sheaf $M_{\Ocal_A(1)}|_C$ admits a rank one quotient $Q \subset N_{f/S}$ of degree $q$ with respect to $\Ocal_A(1)$.
By the scroll argument \ref{lem:surface-scroll}, there is a surface scroll $\Sigma \subset A$ whose $\Ocal_A(1)$-degree is $q + \deg_C \Ocal_A(1)$.
Since $S$ is of general type, $S$ is not equal to $\Sigma$.
Writing $S = Z \cap Z'$ for some $Z \in |\Ocal_A(4)|$ and $Z' \in |\Ocal_A(a)|$, the scroll $\Sigma$ is not contained either in $Z$ or in $Z'$.
In either case, we have an estimate
\[
    k(q + \deg_C \Ocal_A(1)) \geq \deg_C \Ocal_A(1)
\]
for $k = 4$ or $a$.
Thus we have
\[
    2g-2 \geq \left( a + \frac{1}{k} - 2 \right) \deg_C \Ocal_A(1)
\]
as desired.
We see that $S$ is algebraically hyperbolic if and only if $a \geq 2$.

\vspace{8pt} \noindent \hypertarget{Fano1-3}{\textbf{Fano 1-3.}} Complete intersection of a quadric and a cubic in $\PP^5$. \\
\noindent \textit{Description:} $\Lfr(\Ocal(2) \oplus \Ocal(3)) \subset \PP^5$.

The same argument as in \hyperlink{Fano1-2}{\textbf{1-2}} works.

\vspace{8pt} \noindent \hypertarget{Fano1-4}{\textbf{Fano 1-4.}} Complete intersection of three quadrics in $\PP^6$. \\
\noindent \textit{Description:} $\Lfr(\Ocal(2)^{\oplus 3}) \subset \PP^6$.

The same argument as in \hyperlink{Fano1-2}{\textbf{1-2}} works.

\vspace{8pt} \noindent \hypertarget{Fano1-5}{\textbf{Fano 1-5.}} Section of Pl\"ucker embedding of $\Gr(2,5)$ by codimension 2 subspace and a quadric. \\
\noindent \textit{Description:} $\Lfr(\Ocal(2) \oplus \Ocal(1)^{\oplus 2}) \subset \Gr(2,5)$.

Note that for the Grassmannian $\Gr(k,n)$, the intermediate cohomology groups $H^i(\Gr(k,n), \Ocal(d))$ always vanish for $0 < i < k(n-k)$ and $d \in \ZZ$ by Theorem~\ref{thm:Bott-theorem-for-Grassmannians}, so the condition~\ref{eqn:restriction-map-is-surjective} is safisfied.
Since the Pl\"ucker embedding of the Grassmannian is projectively normal, the same argument as above yields the desired estimate when $a > 2$.

For $a = 2$, consider the Pl\"ucker embedding $A \hookrightarrow \PP^n$.
Then there is a surface scroll $\Sigma \subset \PP^n$ as before.
If $\Sigma \not\subset A$, since $A$ is defined by quadrics in $\PP^n$, there is a quadric $Z \in |\Ocal_{\PP^n}(2)|$ with $\Sigma \not\subset Z$.
In this case, we have
\[
    2(q+\deg_C \Ocal_A(1)) \geq \deg_C \Ocal_A(1).
\]
Thus we have
\[
    2g - 2 \geq \left( a - \frac{3}{2} \right) \deg_C \Ocal_A(1).
\]
If $\Sigma \subset A$, one can argue as in \hyperlink{Fano1-2}{\textbf{1-2}}.
Thus $S$ is algebraically hyperbolic if and only if $a \geq 2$.

\vspace{8pt} \noindent \hypertarget{Fano1-6}{\textbf{Fano 1-6.}} $X_{12} \subset \PP^8$. \\
\noindent \textit{Description:} $\Lfr(\Ucal^{\vee}(1) \oplus \Ocal(1)) \subset \Gr(2,5)$.

Since
\[
    \Fcal^{\vee} = (\Ucal^{\vee}(1) \oplus \Ocal(1))^{\vee} = \Ucal^{\vee}(-2) \oplus \Ocal(-1),
\]
from Bott's theorem~\ref{thm:Bott-theorem-for-Grassmannians} and Theorem~\ref{thm:Cohomology-vanishing-of-homogeneous}, one can show that that the condition~\ref{eqn:restriction-map-is-surjective} holds.

We prove that $\Ocal(1)$ is section-dominating for $\Ucal^{\vee}(1)$.
Consider the exact sequence
\[
\begin{tikzcd}
    0 \ar[r] & \Qcal^{\vee} \ar[r] & \Ocal^{\oplus 5} \ar[r] & \Ucal^{\vee} \ar[r] & 0.
\end{tikzcd}
\]
By taking the tensor product by $\Ocal(1)$, we get a diagram
\[
\begin{tikzcd}
    & & 0 \ar[d] & 0 \ar[d] & \\
    & & H^0(\Qcal^{\vee}(1)) \otimes \Ocal \ar[r] \ar[d] & \Qcal^{\vee}(1) \ar[d] & \\
    0 \ar[r] & M_{\Ocal(1)}^{\oplus 5} \ar[r] \ar[d] & H^0(\Ocal(1))^{\oplus 5} \otimes \Ocal \ar[r] \ar[d] & \Ocal(1)^{\oplus 5} \ar[r] \ar[d] & 0 \\
    0 \ar[r] & M_{\Ucal^{\vee}(1)} \ar[r] & H^0(\Ucal^{\vee}(1)) \otimes \Ocal \ar[r] \ar[d] & \Ucal^{\vee}(1) \ar[r] \ar[d] & 0 \\
    & & 0 & 0 &
\end{tikzcd}
\]
Since
\[
    \Vcal((1,1,1)) = K_{(1,1)}\Ucal^{\vee} \otimes K_{(1)}\Qcal^{\vee} = \Qcal^{\vee}(1),
\]
we have $H^1(\Qcal^{\vee}(1)) = 0$, so the middle column is exact.
Note that $\Qcal^{\vee}(1) = \bigwedge^2 \Qcal$ is globally generated, so the map in the first row is surjective.
Hence we have a surjection $M_{\Ocal(1)}^{\oplus 5} \rightarrow M_{\Ucal^{\vee}(1)}$ by the snake lemma.
In other words, $\Ocal(1)$ is section-dominating for $\Ucal^{\vee}(1)$.
In conclusion, combined with the scroll argument, $S$ is algebraically hyperbolic if and only if $a \geq 2$.

\vspace{8pt} \noindent \hypertarget{Fano1-7}{\textbf{Fano 1-7.}} Section of Pl\"ucker embedding of $\Gr(2,6)$ by codimension 5 subspace. \\
\noindent \textit{Description:} $\Lfr(\Ocal(1)^{\oplus 5}) \subset \Gr(2,6)$.

The same argument as in \hyperlink{Fano1-5}{\textbf{1-5}} works.

\vspace{8pt} \noindent \hypertarget{Fano1-8}{\textbf{Fano 1-8.}} $X_{16} \subset \PP^{10}$. \\
\noindent \textit{Description:} $\Lfr(\bigwedge^2 \Ucal^{\vee} \oplus \Ocal(1)^{\oplus 3}) \subset \Gr(3,6)$.

Since
\[
    \Fcal^{\vee} = \left( \bigwedge^2 \Ucal^{\vee} \oplus \Ocal(1)^{\oplus 3} \right)^{\vee} = \Ucal^{\vee}(-1) \oplus \Ocal(-1)^{\oplus 3},
\]
from Bott's theorem~\ref{thm:Bott-theorem-for-Grassmannians} and Theorem~\ref{thm:Cohomology-vanishing-of-homogeneous}, we infer that the condition~\ref{eqn:restriction-map-is-surjective} holds.

There is an obstruction to estimating the degree of $N_{f/S}$ when it admits a generic surjection from $M_{\bigwedge^2 \Ucal^{\vee}}|_C$ and no other components.
Note that
 \[
  \Vcal((1^2,0,1^3)) = K_{(1^2)} \Ucal^{\vee} \otimes K_{(1^3)} \Qcal^{\vee}
    = \bigwedge^2 \Ucal^{\vee}(-1).
 \]
Since the transposition $(3,4)$ fixes $(1^2,0,1^3)$, from Bott's theorem \ref{thm:Bott-theorem-for-Grassmannians} we have $H^i(\bigwedge^2 \Ucal^{\vee}(-1)) = 0$ for all $i$.
Hence $\Ocal(1)$ is not section-dominating for $\bigwedge^2 \Ucal^{\vee}$.

Instead, we claim that $\Ucal^{\vee}$ is section-dominating for $\bigwedge^2 \Ucal^{\vee}$.
Consider the dual of the Koszul complex
\[
\begin{tikzcd}
    0 \ar[r] & \mathrm{Sym}^2\Qcal^{\vee} \ar[r] & \Ocal^{\oplus 21} \ar[r] & (\Ucal^{\vee})^{\oplus 6} \ar[r] & \bigwedge^2 \Ucal^{\vee} \ar[r] & 0.
\end{tikzcd}
\]
Let $\Kcal$ be the kernel of the last map.
As in \hyperlink{Fano1-6}{\textbf{1-6}}, we have a diagram
\[
\begin{tikzcd}
    & & 0 \ar[d] & 0 \ar[d] & \\
    & & H^0(\Kcal) \otimes \Ocal \ar[r] \ar[d] & \Kcal \ar[d] & \\
    0 \ar[r] & M_{\Ucal^{\vee}}^{\oplus 6} = (\Qcal^{\vee})^{\oplus 6} \ar[r] \ar[d] & H^0(\Ucal^{\vee})^{\oplus 6} \otimes \Ocal \ar[r] \ar[d] & (\Ucal^{\vee})^{\oplus 6} \ar[r] \ar[d] & 0 \\
    0 \ar[r] & M_{\bigwedge^2 \Ucal^{\vee}} \ar[r] & H^0 \left( \bigwedge^2 \Ucal^{\vee} \right) \otimes \Ocal \ar[r] \ar[d] & \bigwedge^2 \Ucal^{\vee} \ar[r] \ar[d] & 0 \\
    & & 0 & 0 &
\end{tikzcd}
\]
Note that $H^1(\Ocal) = H^2(\mathrm{Sym}^2\Qcal^{\vee}) = 0$ by Theorem~\ref{thm:Cohomology-vanishing-of-homogeneous}.
Thus $H^1(\Kcal) = 0$, i.e., the middle column is exact.
Also, since $\Kcal$ admits a surjection from the trivial bundle, it is globally generated.
Hence we have a surjection $(\Qcal^{\vee})^{\oplus 6} \rightarrow M_{\bigwedge^2 \Ucal^{\vee}}$ by the snake lemma, as wished.

Let $H$ be a hyperplane section of $\Gr(3,6)$.
We prove that the $\QQ$-vector bundle $\Qcal^{\vee} \langle \frac{H}{2} \rangle$ is nef on $C$.
Indeed, if there is a generic surjection $M_{\bigwedge^2 \Ucal^{\vee}}|_C \rightarrow N_{f/S}$, then it induces a generic surjection $\Qcal^{\vee}|_C \rightarrow N_{f/S}$.
By the assertion, we have the degree bound
\[
    \deg N_{f/S} \geq -\frac{1}{2} \deg_C \Ocal(1),
\]
or
\[
    2g-2 \geq (C.K_S) - \frac{1}{2} \deg_C \Ocal(1) = \left( a - \frac{3}{2} \right) \deg_C \Ocal(1).
\]
Combined with the scroll argument, one can see that $S$ is algebraically hyperbolic if and only if $a \geq 2$.

We only need to prove that $\Qcal^{\vee} \langle \frac{H}{2} \rangle$ is nef on $C' = f(C)$.
If $\Qcal^{\vee}|_{C'} \rightarrow N$ is a surjection onto a line bundle $N$ on $C'$, letting $\Kcal$ be the kernel, the assertion is equivalent to saying that
\[
    \deg_{C'} N = \deg_{C'} \Qcal^{\vee} - \deg_{C'} \Kcal \geq -\frac{1}{2} \deg_{C'} \Ocal(1).
\]
If $\deg_{C'} \Kcal \geq -\frac{1}{2} \deg_{C'} \Ocal(1)$, the inclusion $\Kcal \subset \Qcal^{\vee}|_{C'}$ induces a nonzero section of $\bigwedge^2 \Qcal^{\vee}|_{C'} \otimes \det \Kcal^{\vee}$, or
\[
    \mathrm{Sym}^{2}\left( \bigwedge^2 \Qcal^{\vee}|_{C'} \otimes \det \Kcal^{\vee} \right) \subset \left. \mathrm{Sym}^2 \left( \bigwedge^2 \Qcal^{\vee} \right)(1) \right|_{C'}.
\]
Hence we only need to show that
\begin{equation} \label{eqn:Vanishing-condition-on-curve-1-8}
    H^0 \left( C', \mathrm{Sym}^2 \left( \bigwedge^2 \Qcal^{\vee} \right) \otimes \Ocal(1) \right) = 0.
\end{equation}

For the vanishing result, we first prove that the restriction map $H^0(A,\Ocal(d)) \rightarrow H^0(S,\Ocal(d))$ is surjective for $d \geq 0$.
This can be done similarly as in the beginning of Section~\ref{subsec:setup}; we only need to replace $\Fcal$ by
\[
    \Ecal = \bigwedge^2 \Ucal^{\vee} \oplus \Ocal(1)^{\oplus 3} \oplus \Ocal(a).
\]
In particular, there is the Koszul resolution
\[
\begin{tikzcd}
    K^{\bullet}(\Ecal^{\vee}) \ar[r] & \Ocal_{C'} \ar[r] & 0,
\end{tikzcd}
\]
where
\[
    \Ecal = \bigwedge^{2} \Ucal^{\vee} \oplus \Ocal(1)^{\oplus 3} \oplus \Ocal(a) \oplus \Ocal(d)
\]
for an integer $d \geq 1$ with $C' \in |\Ocal_S(d')|$, and
\[
    K^{i}(\Ecal^{\vee}) = \bigwedge^{-i-1} \Ecal^{\vee}
\]
for $i \leq -1$.

Note that $\mathrm{Sym}^{2}(\bigwedge^2 \Qcal^{\vee})$ is a direct summand of $(\bigwedge^2 \Qcal^{\vee})^{\otimes 2}$ and there is a decomposition of $(\bigwedge^2 \Qcal^{\vee})^{\otimes 2}$ into irreducible representations
\[
\begin{tikzcd}
    0 \ar[r] & \Qcal^{\vee}(-1) \ar[r] & \bigwedge^2 \Qcal^{\vee} \otimes \bigwedge^2 \Qcal^{\vee} \ar[r] & K_{(2,2)}\Qcal^{\vee} \ar[r] & 0.
\end{tikzcd}
\]
From the spectral sequence, it suffices to show that
\begin{equation} \label{eqn:Spectral-sequence-1-8}
    H^{i}(K^{-i-1}(\Ecal^{\vee}) \otimes \Qcal^{\vee}) = H^{i}(K^{-i-1}(\Ecal^{\vee}) \otimes K_{(2,2)}\Qcal^{\vee}(1)) = 0.
\end{equation}

To this end, we need several cohomology vanishing results.
We made a table of necessary vanishing results below.
For example, the first row reads as the vanishing of $H^{i}(\Qcal^{\vee}(-N))$ is required, where $i \leq 8$ and $N \geq 0$.
\begin{table}[ht]
\begin{tabular}{|c|c|c|}
\hline
$H^i$      & vector bundle                                & twist       \\ \hline
$i \leq 8$ & $\Qcal^{\vee}$                               & $N \geq 0$  \\ \hline
$i \leq 8$ & $K_{(2,2)}\Qcal^{\vee}$                      & $N \geq -1$ \\ \hline
$i \leq 6$ & $\Ucal^{\vee} \otimes \Qcal^{\vee}$          & $N \geq 1$  \\ \hline
$i \leq 6$ & $\Ucal^{\vee} \otimes K_{(2,2)}\Qcal^{\vee}$ & $N \geq 0$  \\ \hline
$i \leq 7$ & $K_{(1,1)}\Ucal^{\vee} \otimes \Qcal^{\vee}$ & $N \geq 2$  \\ \hline
\multirow{2}{*}{$i \leq 7$} & \multirow{2}{*}{$K_{(1,1)}\Ucal^{\vee} \otimes K_{(2,2)}\Qcal^{\vee}$} & $N \geq 1$ \\
           & & $N \geq 7$ if $i = 7$ \\ \hline
\end{tabular}
\end{table}

\noindent The first two vanishing results are due to Theorem~\ref{thm:Cohomology-vanishing-of-homogeneous} and the fact that
\[
    \Vcal((1,1,1,2,2,0)) = K_{(2,2)}\Qcal^{\vee}(1)
\]
has vanishing cohomology groups by Theorem~\ref{thm:Bott-theorem-for-Grassmannians}.
For the other vanishing results, below we listed the non-vanishing results of cohomology groups, except for the top ones.
For example, the first row reads as follows: for $i \leq 8$ and $N \geq 0$, $H^i(\Ucal^{\vee} \otimes K_{(2,2)}\Qcal^{\vee}(-N)) \neq 0$ if and only if $i = 8$ and $N = 5$.
\begin{table}[ht]
\begin{tabular}{|c|c|c|}
\hline
vector bundle                                & ($i$, twist)       \\ \hline
$\Ucal^{\vee} \otimes \Qcal^{\vee}$          & -        \\ \hline
$\Ucal^{\vee} \otimes K_{(2,2)}\Qcal^{\vee}$ & $(8,5)$  \\ \hline
$K_{(1,1)}\Ucal^{\vee} \otimes \Qcal^{\vee}$ & $(1,1)$  \\ \hline
$K_{(1,1)}\Ucal^{\vee} \otimes \Qcal^{\vee}$ & $(7,4)$  \\ \hline
\end{tabular}
\end{table}

\newpage

\vspace{8pt} \noindent \hypertarget{Fano1-9}{\textbf{Fano 1-9.}} $X_{18} \subset \PP^{11}$. \\
\noindent \textit{Description:} $\Lfr(\Qcal^{\vee}(1) \oplus \Ocal(1)^{\oplus 2}) \subset \Gr(2,7)$.

Note that
\[
    \Fcal^{\vee} = (\Qcal^{\vee}(1) \oplus \Ocal(1)^{\oplus 2})^{\vee} = \Qcal(-1) \oplus \Ocal(-1)^{\oplus 2}
\]
and
\[
    \bigwedge^i \Qcal = \bigwedge^{5-i} \Qcal^{\vee}(1)
\]
for $1 \leq i \leq 4$.
Again, it is easy to see that the condition~\ref{eqn:restriction-map-is-surjective} is satisfied.

Consider the dual of the Koszul complex
\[
\begin{tikzcd}
    0 \ar[r] & \mathrm{Sym}^4\Ucal \ar[r] & \Ocal^{\oplus 210} \ar[r] & \Qcal^{\oplus 84} \\
    \ar[r] & \left( \bigwedge^2 \Qcal \right)^{\oplus 28} \ar[r] & \left( \bigwedge^3 \Qcal \right)^{\oplus 7} \ar[r] & \bigwedge^4 \Qcal \ar[r] & 0.
\end{tikzcd}
\]
Let $\Kcal$ be the kernel of the last map.
By Theorem~\ref{thm:Bott-theorem-for-Grassmannians} and the spectral sequence, the map
\[
\begin{tikzcd}
    H^0 \left( \bigwedge^3 \Qcal \right)^{\oplus 7} \ar[r] & H^0 \left( \bigwedge^4 \Qcal \right)
\end{tikzcd}
\]
is surjective. Thus we have a diagram
\[
\begin{tikzcd}
    & & 0 \ar[d] & 0 \ar[d] & \\
    & & H^0(\Kcal) \otimes \Ocal \ar[r] \ar[d] & \Kcal \ar[d] & \\
    0 \ar[r] & M_{\bigwedge^3 \Qcal}^{\oplus 7} \ar[r] \ar[d] & H^0\left( \bigwedge^3 \Qcal \right)^{\oplus 7} \otimes \Ocal \ar[r] \ar[d] & \left( \bigwedge^3 \Qcal \right)^{\oplus 7} \ar[r] \ar[d] & 0 \\
    0 \ar[r] & M_{\bigwedge^4 \Qcal} \ar[r] & H^0 \left( \bigwedge^4 \Qcal \right) \otimes \Ocal \ar[r] \ar[d] & \bigwedge^4 \Qcal \ar[r] \ar[d] & 0 \\
    & & 0 & 0 &
\end{tikzcd}
\]
Since $\Kcal$ admits a surjection from $\bigwedge^2 \Qcal$, it is globally generated.
Hence $\bigwedge^3 \Qcal$ is section-dominating for $\bigwedge^4 \Qcal$.
By the same procedure, one can show that $\Qcal$ is section-dominating for $\bigwedge^4 \Qcal$.

Observe that the restriction map $H^0(A,\Ocal(d)) \rightarrow H^0(S,\Ocal(d))$ is surjective as in \hyperlink{Fano1-8}{\textbf{1-8}}.
We aim to show that
\begin{equation}
    H^0(f(C), K_{(2)}\Ucal^{\vee}(-1)) = 0
\end{equation}
because $M_{\Qcal} = \Ucal$.
Below, we listed the required vanishing results and the non-vanishing results of cohomology groups, except for the top ones.
In conclusion, $S$ is algebraically hyperbolic if and only if $a \geq 2$.

\begin{table}[ht]
\begin{tabular}{|c|c|c|}
\hline
$H^i$      & vector bundle                                & twist       \\ \hline
$i \leq 9$ & $K_{(2)}\Ucal^{\vee}$                        & $N \geq 1$  \\ \hline
$i \leq 5$ & $K_{(2)}\Ucal^{\vee} \otimes K_{(1^4)}\Qcal^{\vee}$ & $N \geq 1$ \\ \hline
$i \leq 6$ & $K_{(2)}\Ucal^{\vee} \otimes K_{(1^3)}\Qcal^{\vee}$ & $N \geq 2$  \\ \hline
$i \leq 7$ & $K_{(2)}\Ucal^{\vee} \otimes K_{(1^2)}\Qcal^{\vee}$ & $N \geq 3$  \\ \hline
$i \leq 8$ & $K_{(2)}\Ucal^{\vee} \otimes \Qcal^{\vee}$   & $N \geq 4$  \\ \hline
\end{tabular}
\end{table}

\begin{table}[ht]
\begin{tabular}{|c|c|c|}
\hline
vector bundle                                       & ($i$, twist) \\ \hline
$K_{(2)}\Ucal^{\vee} \otimes K_{(1^4)}\Qcal^{\vee}$ & $(9,7)$      \\ \hline
$K_{(2)}\Ucal^{\vee} \otimes K_{(1^3)}\Qcal^{\vee}$ & $(8,6)$      \\ \hline
$K_{(2)}\Ucal^{\vee} \otimes K_{(1^2)}\Qcal^{\vee}$ & $(2,2)$      \\ \hline
$K_{(2)}\Ucal^{\vee} \otimes \Qcal^{\vee}$          & $(1,1)$      \\ \hline
\end{tabular}
\end{table}

\vspace{8pt} \noindent \hypertarget{Fano1-10}{\textbf{Fano 1-10.}} $X_{22} \subset \PP^{13}$. \\
\noindent \textit{Description:} $\Lfr((\bigwedge^2 \Ucal^{\vee})^{\oplus 3}) \subset \Gr(3,7)$.

Since
\[
    \Fcal^{\vee} = \left( \left( \bigwedge^2 \Ucal^{\vee} \right)^{\oplus 3} \right)^{\vee} = (\Ucal^{\vee}(-1))^{\oplus 3},
\]
we infer that the condition~\ref{eqn:restriction-map-is-surjective} is satisfied as before.
By a similar argument, the restriction map $H^0(A, \Ocal(d)) \rightarrow H^0(S, \Ocal(d))$ is surjective as well.

As in \hyperlink{Fano1-8}{\textbf{1-8}}, we obtain the algebraic hyperbolicity of $S$ once we prove
\[
    H^0 \left(f(C), \mathrm{Sym}^2 \left( \bigwedge^3 \Qcal^{\vee} \right) \otimes \Ocal(1) \right) = 0.
\]
Consider a decomposition into irreducible representations:
\[
\begin{tikzcd}
    0 \ar[r] & \bigwedge^2 \Qcal^{\vee}(-1) \ar[r] & \left( \bigwedge^3 \Qcal^{\vee} \right)^{\otimes 2} \ar[r] & K_{(2^3)} \Qcal^{\vee} \ar[r] & 0.
\end{tikzcd}
\]
Then we only need to see that
\begin{equation}
    H^0 (f(C), K_{(2^3)}\Qcal^{\vee}(1)) = H^0(f(C), K_{(1^2)}\Qcal^{\vee}) = 0.
\end{equation}
As before, below we listed the required vanishing results and the non-vanishing results of cohomology groups, except for the top ones.
In conclusion, $S$ is algebraically hyperbolic if and only if $a \geq 2$.

\begin{table}[ht]
\begin{tabular}{|c|c|c|}
\hline
$H^i$       & vector bundle                                & twist       \\ \hline
$i \leq 11$ & $K_{(2^3)}\Qcal^{\vee}$ & $N \geq -1$  \\ \hline
$i \leq 11$ & $K_{(1^2)}\Qcal^{\vee}$ & $N \geq 0$  \\ \hline
$i \leq 9$  & $\Ucal^{\vee} \otimes K_{(2^3)}\Qcal^{\vee}$ & $N \geq 0$  \\ \hline
$i \leq 9$  & $\Ucal^{\vee} \otimes K_{(1^2)}\Qcal^{\vee}$ & $N \geq 1$  \\ \hline
\multirow{2}{*}{$i \leq 10$} & \multirow{2}{*}{$K_{(1,1)}\Ucal^{\vee} \otimes K_{(2^3)}\Qcal^{\vee}$} & $N \geq 1$ \\
           & & $N \geq 7$ if $i = 10$ \\ \hline
$i \leq 10$ & $K_{(1,1)}\Ucal^{\vee} \otimes K_{(1^2)}\Qcal^{\vee}$ & $N \geq 2$  \\ \hline
$i \leq 7$ & $K_{(2)}\Ucal^{\vee} \otimes K_{(2^3)}\Qcal^{\vee}$ & $N \geq 1$  \\ \hline
$i \leq 7$  & $K_{(2)}\Ucal^{\vee} \otimes K_{(1^2)}\Qcal^{\vee}$   & $N \geq 2$  \\ \hline
$i \leq 8$  & $K_{(2,1)}\Ucal^{\vee} \otimes K_{(2^3)}\Qcal^{\vee}$ & $N \geq 2$  \\ \hline
$i \leq 8$  & $K_{(2,1)}\Ucal^{\vee} \otimes K_{(1^2)}\Qcal^{\vee}$ & $N \geq 3$  \\ \hline
$i \leq 9$  & $K_{(2,2)}\Ucal^{\vee} \otimes K_{(2^3)}\Qcal^{\vee}$ & $N \geq 3$  \\ \hline
$i \leq 9$  & $K_{(2,2)}\Ucal^{\vee} \otimes K_{(1^2)}\Qcal^{\vee}$ & $N \geq 4$  \\ \hline
$i \leq 5$  & $K_{(3)}\Ucal^{\vee} \otimes K_{(2^3)}\Qcal^{\vee}$   & $N \geq 2$  \\ \hline
$i \leq 5$  & $K_{(3)}\Ucal^{\vee} \otimes K_{(1^2)}\Qcal^{\vee}$   & $N \geq 3$  \\ \hline
$i \leq 6$  & $K_{(3,1)}\Ucal^{\vee} \otimes K_{(2^3)}\Qcal^{\vee}$ & $N \geq 3$  \\ \hline
$i \leq 6$  & $K_{(3,1)}\Ucal^{\vee} \otimes K_{(1^2)}\Qcal^{\vee}$ & $N \geq 4$  \\ \hline
$i \leq 7$  & $K_{(3,2)}\Ucal^{\vee} \otimes K_{(2^3)}\Qcal^{\vee}$ & $N \geq 4$  \\ \hline
$i \leq 7$  & $K_{(3,2)}\Ucal^{\vee} \otimes K_{(1^2)}\Qcal^{\vee}$ & $N \geq 5$  \\ \hline
$i \leq 8$  & $K_{(3,3)}\Ucal^{\vee} \otimes K_{(2^3)}\Qcal^{\vee}$ & $N \geq 5$  \\ \hline
$i \leq 8$  & $K_{(3,3)}\Ucal^{\vee} \otimes K_{(1^2)}\Qcal^{\vee}$ & $N \geq 6$  \\ \hline
\end{tabular}
\end{table}

\begin{table}[ht]
\begin{tabular}{|c|c|c|}
\hline
vector bundle                                         & ($i$, twist) \\ \hline
$\Ucal^{\vee} \otimes K_{(2^3)}\Qcal^{\vee}$          & $(11,6)$     \\ \hline
$\Ucal^{\vee} \otimes K_{(1^2)}\Qcal^{\vee}$          & -            \\ \hline
$K_{(1,1)}\Ucal^{\vee} \otimes K_{(2^3)}\Qcal^{\vee}$ & $(10,5)$     \\ \hline
$K_{(1,1)}\Ucal^{\vee} \otimes K_{(1^2)}\Qcal^{\vee}$ & -            \\ \hline
$K_{(2)}\Ucal^{\vee} \otimes K_{(2^3)}\Qcal^{\vee}$   & $(11,6)$, $(11,7)$ \\ \hline
$K_{(2)}\Ucal^{\vee} \otimes K_{(1^2)}\Qcal^{\vee}$   & $(10,6)$     \\ \hline
$K_{(2,1)}\Ucal^{\vee} \otimes K_{(2^3)}\Qcal^{\vee}$ & $(11,7)$     \\ \hline
$K_{(2,1)}\Ucal^{\vee} \otimes K_{(1^2)}\Qcal^{\vee}$ & -            \\ \hline
$K_{(2,2)}\Ucal^{\vee} \otimes K_{(2^3)}\Qcal^{\vee}$ & $(10,6)$     \\ \hline
$K_{(2,2)}\Ucal^{\vee} \otimes K_{(1^2)}\Qcal^{\vee}$ & $(2,2)$      \\ \hline
$K_{(3)}\Ucal^{\vee} \otimes K_{(2^3)}\Qcal^{\vee}$   & $(6,3)$, $(11,7)$, $(11,8)$ \\ \hline
$K_{(3)}\Ucal^{\vee} \otimes K_{(1^2)}\Qcal^{\vee}$   & $(10,7)$     \\ \hline
$K_{(3,1)}\Ucal^{\vee} \otimes K_{(2^3)}\Qcal^{\vee}$ & $(11,7)$, $(11,8)$ \\ \hline
$K_{(3,1)}\Ucal^{\vee} \otimes K_{(1^2)}\Qcal^{\vee}$ & $(10,7)$     \\ \hline
$K_{(3,2)}\Ucal^{\vee} \otimes K_{(2^3)}\Qcal^{\vee}$ & $(11,8)$     \\ \hline
$K_{(3,2)}\Ucal^{\vee} \otimes K_{(1^2)}\Qcal^{\vee}$ & $(2,2)$      \\ \hline
$K_{(3,3)}\Ucal^{\vee} \otimes K_{(2^3)}\Qcal^{\vee}$ & $(3,2)$, $(10,7)$ \\ \hline
$K_{(3,3)}\Ucal^{\vee} \otimes K_{(1^2)}\Qcal^{\vee}$ & $(2,2)$      \\ \hline
\end{tabular}
\end{table}

\newpage

\vspace{8pt} \noindent \hypertarget{Fano1-11}{\textbf{Fano 1-11.}} Hypersurface of degree 6 in $\PP(1^3,2,3)$. \\
\noindent \textit{Description:} $\Lfr(\Ocal(6)) \subset \PP(1^3,2,3)$.

We have the action of
\[
    G = (\Hom(\Sym^2 \CC^3, \CC) \times \Hom(\Sym^3 \CC^3, \CC)) \rtimes \GL_3
\]
on $A = \PP(1^3,2,3)$ as in Section~\ref{subsec:weighted-projective-spaces}.
Write $A = \Proj \CC[s_1, s_2, s_3, t_1, t_2]$ with $\deg s_1 = \deg s_2 = \deg s_3 = 1$, $\deg t_1 = 2$ and $\deg t_3 = 3$.
This $G$-action has a dense orbit $A_0 = A \setminus \Zcal(s_1, s_2, s_3)$.
By Theorem~\ref{thm:Almost-section-dominating-on-WPS}, $\Ocal_{A}(6)$ is section-dominating for $\Ocal_{A}(a)$ on $A_0$ provided $a \geq 8$.
As in \hyperlink{Fano1-1}{\textbf{1-1}}, one can see that $S$ is algebraically hyperbolic when $a \geq 9$.

For $4 \leq a \leq 8$, consider the diagram
\[
\begin{tikzcd}[column sep=small]
    & & & 0 \ar[d] \\
    & & & M_{\Ocal_A(a')}(a'') \ar[d] \\
    0 \ar[r] & H^0(\Ocal_A(a')) \otimes M_{\Ocal_A(a'')} \ar[r] \ar[d] & H^0(\Ocal_A(a')) \otimes H^0(\Ocal_A(a'')) \otimes \Ocal_A \ar[r] \ar[d] & H^0(\Ocal_A(a')) \otimes \Ocal_A(a'') \ar[d] \\
    0 \ar[r] & M_{\Ocal_A(a)} \ar[r] &  H^0(\Ocal_A(a)) \otimes \Ocal_A \ar[r] \ar[d] & \Ocal_A(a) \\
    & & K \otimes \Ocal_A &
\end{tikzcd}
\]
where $a' = \lfloor \frac{a}{2} \rfloor$, $a'' = a-a'$ and $K$ is the cokernel of the multiplication map
\[
\begin{tikzcd}
    H^0(\Ocal_A(a')) \otimes H^0(\Ocal_A(a'')) \ar[r] & H^0(\Ocal_A(a)).
\end{tikzcd}
\]
Note that $\Ocal_S(1)$ is globally generated on $S$ as $S$ avoids $A \setminus A_0$.
Also for $k \leq \min\{ 5,a-1 \}$, the restriction map gives an isomorphism $H^0(\Ocal_A(k)) \simeq H^0(\Ocal_S(k))$, so $M_{\Ocal_S(k)} = M_{\Ocal_A(k)}|_S$.
Thus $\Ocal_S(a')$, $\Ocal_S(a'')$ and $M_{\Ocal_S(a')}(a'') = M_{\Ocal_S(a')}(a') \otimes \Ocal_S(a''-a')$ are all globally generated on $S$, we have an exact sequence
\[
\begin{tikzcd}
    M_{\Ocal_S(a')}(a'')|_C \ar[r] & \Kcal|_C \ar[r] & K \otimes \Ocal_C \ar[r] & 0
\end{tikzcd}
\]
where $\Kcal$ is the cokernel of the map
\[
\begin{tikzcd}
    H^0(\Ocal_S(a')) \otimes M_{\Ocal_A(a'')} \ar[r] & M_{\Ocal_A(a)}.
\end{tikzcd}
\]

Assume that there is a generic surjection $M_{\Ocal_A(a)}|_C \rightarrow N_{f/S}$.
If the induced map $H^0(\Ocal_A(a')) \otimes M_{\Ocal_A(a'')}|_C \rightarrow N_{f/S}$ has a torsion image $\Tcal$, then there is a generic surjection $\Kcal|_C \rightarrow N' = N_{f/S}/\Tcal$.
Again, if the induced map $M_{\Ocal_A(a')}(a'')|_C \rightarrow N'$ has a torsion image $\Tcal'$, then there is a generic surjection $K \otimes \Ocal_C \rightarrow N'/\Tcal'$, yielding $\deg_C N_{f/S} \geq 0$.
Then we have
\[
    2g-2 \geq (C.K_S) = (a-2) \deg_C \Ocal_A(1)
\]
which is the desired estimate.
On the other hand, if $M_{\Ocal_A(a')}(a'')|_C \rightarrow N'$ is generically surjective, since $M_{\Ocal_A(a')}(a'')|_S = M_{\Ocal_S(a')}(a'')$ is globally generated on $S$, we would get the above estimate as well.
Hence we may assume that there is a generic surjection $M_{\Ocal_A(a'')}|_C \rightarrow N_{f/S}$.
Repeating this procedure, we would arrive at a generic surjection $M_{\Ocal_A(1)}|_C \rightarrow N_{f/S}$.
Since $M_{\Ocal_A(1)}(1)|_S = M_{\Ocal_S(1)}(1)$ is globally generated on $S$, we get $\deg_C N(1) \geq 0$, so
\[
    2g-2 \geq (C.K_S) - \deg_C \Ocal_A(1) = (a-3) \deg_C \Ocal_A(1)
\]
which is again the desired estimate for $a \geq 4$.

In conclusion, $S$ is algebraically hyperbolic if $a \geq 4$. The remaining case is $a = 3$.
Note that for $a = 3$, the surface $S$ can be regarded as a very general surface of degree $6$ in $\PP(1^3, 2)$, which is also an unknown case in \cite[Lemma~3.9, Proposition~3.10]{CR2023}.

\vspace{8pt} \noindent \hypertarget{Fano1-12}{\textbf{Fano 1-12.}} Double cover of $\PP^3$ branched over a quartic surface. \\
\noindent \textit{Description:} $\Lfr(\Ocal(4)) \subset \PP(1^4, 2)$.

Since $\Ocal_{A}(2)$ is section-dominating for $\Ocal_{A}(a)$ on $A_0 = A \setminus \{ [0^4:1] \}$ for $a \geq 2$, one can apply the same argument as in \hyperlink{Fano1-1}{\textbf{1-1}} to prove that $S$ is algebraically hyperbolic for $a \geq 4$.
The remaining case is $a = 3$.

\vspace{8pt} \noindent \hypertarget{Fano1-13}{\textbf{Fano 1-13.}} Cubic threefold. \\
\noindent \textit{Description:} $\Lfr(\Ocal(3)) \subset \PP^4$.

As in \hyperlink{Fano1-2}{\textbf{1-2}}, one can see that $S$ is algebraically hyperbolic if and only if $a \geq 3$.

\vspace{8pt} \noindent \hypertarget{Fano1-14}{\textbf{Fano 1-14.}} Complete intersection of two quadrics in $\PP^5$. \\
\noindent \textit{Description:} $\Lfr(\Ocal(2)^{\oplus 2}) \subset \PP^5$.

The same as in \hyperlink{Fano1-13}{\textbf{1-13}}.

\vspace{8pt} \noindent \hypertarget{Fano1-15}{\textbf{Fano 1-15.}} Section of Pl\"ucker embedding of $\Gr(2,5)$ by codimension 3 subspace. \\
\noindent \textit{Description:} $\Lfr(\Ocal(1)^{\oplus 3}) \subset \Gr(2,5)$.

The same as in \hyperlink{Fano1-13}{\textbf{1-13}}.

\vspace{8pt} \noindent \hypertarget{Fano1-16}{\textbf{Fano 1-16.}} Quadric threefold. \\
\noindent \textit{Description:} $\Lfr(\Ocal(2)) \subset \PP^4$.

As in \hyperlink{Fano1-2}{\textbf{1-2}}, one can see that $S$ is algebraically hyperbolic if and only if $a \geq 4$.

\vspace{8pt} \noindent \hypertarget{Fano1-17}{\textbf{Fano 1-17.}} Projective space $\PP^3$. \\
\noindent \textit{Description:} $\PP^3$.

This is the known case by \cite{CR2019}: $S$ is algebraically hyperbolic if and only if $a \geq 5$.

\begin{rmk}
    Using the argument of \hyperlink{Fano1-11}{\textbf{1-11}}, one can show that surfaces of the optimal degree in \hyperlink{Fano1-1}{\textbf{1-1}}, \hyperlink{Fano1-11}{\textbf{1-11}} or \hyperlink{Fano1-12}{\textbf{1-12}} do not contain rational curves.
    Further, one obtains that a very general surface of degree $9$ in $\PP(1,1,1,3)$ is algebraically hyperbolic since $\Ocal(1)$ is globally generated on $\PP(1,1,1,3) \setminus \{ [0:0:0:1] \}$.
    This resolves one unknown case in \cite[Lemma~3.9 and Proposition~3.10]{CR2023}.
\end{rmk}

\bibliographystyle{amsplain}

\end{document}